\def\BC{\mathbb C}
\def\BN{\mathbb N}
\def\BR{\mathbb R}
\def\cD{\mathcal D}
\def\rIm{\mathrm{Im}}
\def\rRe{\mathrm{Re}}
\def\rd{\mathrm d}
\def\e{\mathrm e}
\def\ri{\mathrm i}
\def\Ga{\Gamma}
\def\Om{\Omega}
\def\al{\alpha}
\def\be{\beta}
\def\ga{\gamma}
\def\de{\delta}
\def\ve{\varepsilon}
\def\te{\theta}
\def\la{\lambda}
\def\si{\sigma}
\def\vp{\varphi}
\def\f{\frac}
\def\ov{\overline}
\def\pa{\partial}
\def\wh{\widehat}
\def\wt{\widetilde}
\def\BCC{_{0}C^1}
\def\HH{H_\al}
\def\paa{\pa_t^\al}
\def\ddda{\rd_t^\al}
\theoremstyle{thmstyle}
\newtheorem{thm}{Theorem}[section]
\newtheorem{lem}[thm]{Lemma}
\newtheorem{coro}[thm]{Corollary}
\newtheorem*{prob}{Problem}
\newtheorem{rem}[thm]{Remark}
\numberwithin{equation}{section}
\title[Unique continuation and inverse source problem]
{Inverse source problem for a one-dimensional
time-fractional diffusion equation and
unique continuation for weak solutions}
\author[Z. Li]{Zhiyuan Li}
\address{Zhiyuan Li\newline
School of Mathematics and Statistics, Ningbo University, 818 Fenghua Road,
Ningbo, Zhejiang 315211, China.}
\email{lizhiyuan@nbu.edu.cn}
\author[Y. Liu]{Yikan Liu}
\address{Yikan Liu\newline
Research Center of Mathematics for Social Creativity, Research Institute for
Electronic Science, Hokkaido University, N12W7, Kita-Ward, Sapporo
060-0812, Japan.}
\email{ykliu@es.hokudai.ac.jp}
\author[M. Yamamoto]{Masahiro Yamamoto}
\address{Masahiro Yamamoto\newline
Graduate School of Mathematical Sciences, The University of Tokyo,
3-8-1 Komaba, Meguro-ku, Tokyo 153-8914, Japan;
Honorary Member of Academy of Romanian Scientists,
Ilfov, nr. 3, Bucuresti, Romania;
Correspondence member of Accademia Peloritana dei Pericolanti,
Palazzo Universit\`a, Piazza S. Pugliatti 1 98122 Messina, Italy.} 
\email{myama@next.odn.ne.jp}
\subjclass[2010]{35K20, 35R30, 35B53}
\keywords{Time-fractional diffusion equation,
Inverse source problem,
Unique continuation,
Lateral Cauchy Problem,
sharp uniqueness}
\date{}
\begin{document}
\maketitle

\begin{abstract}
In this paper, we obtain the sharp uniqueness for an inverse $x$-source
problem for a one-dimensional time-fractional diffusion equation with a
zeroth-order term by the minimum possible lateral Cauchy data. The key
ingredient is the unique continuation which holds for weak solutions.
\end{abstract}


\section{Introduction and main results}

As a representative of various nonlocal models, time-fractional diffusion
equations have attracted consistent attention of multidisciplinary researchers
owing to their capability in describing anomalous diffusion (e.g.,
\cite{AG92,HH98}). In the last decades, fundamental theory has been developed
rapidly for time-fractional diffusion equations, represented by the fundamental
solution and the well-posedness results established e.g.\! in
\cite{EK04,KRY21}.
Remarkably, it reveals in \cite{LY19,SY11} that time-fractional diffusion
equations inherit the time analyticity and the maximum principle from their
integer counterparts. On the contrary, they differ considerably in view of the
long-time asymptotic behavior (see \cite{SY11}).

In contrast to the above results, some issues remain open in the uniqueness of
the lateral Cauchy problem and the unique continuation for time-fractional
diffusion equations. As is known, these two properties essentially characterize
parabolic equations in the sense of the infinite propagation speed of local
information of homogeneous equations (see \cite{SS87}). However, for
time-fractional diffusion equations, most literature only obtained weak unique
continuation because additional assumptions were required on the boundary or
at the initial time (see \cite{CLN13,HLY,JLLY,LN16,LN19,SY11,XCY11}).

In this article, we are concerned with an inverse source problem
for a one-dimensional time-fractional diffusion equation with a potential
term, which is described tentatively:
\begin{equation}\label{eq-1.1}
\ddda y(x,t) - y_{x x}(x,t) +p(x)y=\rho(t)f(x)\quad\mbox
{in }(0,1)\times(0,T).
\end{equation}
Here the Caputo derivative $\ddda$ is defined by
$$
\ddda w(t) = \f1{\Ga(1-\al)}\int^t_0(t-\tau)^{-\al}\f{\rd w}{\rd\tau}(\tau)\,
\rd\tau
\quad\mbox{for $w\in C^1[0,T]$ or $\in W^{1,1}(0,T)$,}
$$
where $0<\al<1$ and $\Ga(\,\cdot\,)$ is the gamma function.

Formulation \eqref{eq-1.1} is quite conventional, but the definition
of $\ddda$ requires the differentiability of $y$ in $t$.
On the other hand, if for $y$,
we require the $H^2(0,1)$-regularity in $x$ and
$L^2(0,T)$-regularity in $t$, then $\ddda y$ should be in
$L^2$-space in $x$ and $t$, which is interpreted to be
weaker regularity than the one-time differentiability in time.
Moreover, for $\rho f\in L^2((0,1) \times (0,T))$, it is required that
$\ddda y \in L^2((0,1)\times (0,T))$. Thus, for such a class of
solutions, we are suggested to exploit the class of
functions $w=w(t)$ such that $\ddda w \in L^2(0,T)$, and the
class $W^{1,1}(0,T)$ apparently seems narrow for this requirement.
Therefore, we start to reformulate the time-fractional derivative
in adequate Sobolev spaces.  We emphasize that such formulated
time-fractional derivatives allow us to work within the regularity
specified by \eqref{eq-1.3} below.

First we set
$$
\BCC[0,T] := \{ w \in C^1[0,T];\, w(0) = 0\}.
$$
We consider the Caputo derivative $\ddda w(t)$ ($0<\al<1$)
as an operator from $\BCC[0,T]$ to $L^2(0,T)$.
In other words, by setting $\cD(\ddda) = {\BCC}[0,T]$, we define
the domain $\cD(\ddda)$.
The operator $\ddda$ with domain $\BCC[0,T]$ is not a closed operator, but it admits a
unique minimum closed extension, which is
denoted by $\paa$ (e.g., Kubica, Ryszewska and Yamamoto \cite{KRY21}).
We can characterize the domain $\cD(\paa)$ as follows.
We recall the Sobolev-Slobodecki space
$H^\al(0,T)$ with the norm $\|\cdot\|_{H^\al(0,T)}$ is defined by
$$
\| w\|_{H^\al(0,T)}:=
\left( \| w\|^2_{L^2(0,T)}
+ \int^T_0\!\!\!\int^T_0\f{|w(t)-w(\tau)|^2}{|t-\tau|^{1+2\al}}
\,\rd t \rd\tau \right)^{\f12}
$$
(see e.g.\! Adams \cite{Ad}).
We define the Riemann-Liouville integral operator $J^\be$ for $\be>0$ as
\[
J^\be w(t):=\f1{\Ga(\be)}\int_0^t(t-\tau)^{\be-1}w(\tau)\,\rd\tau,
\quad w\in L^2_{\mathrm{loc}}(0, +\infty).
\]

Setting
$$
\HH(0,T):= \ov{\BCC[0,T]}^{H^\al(0,T)},
$$
we see that
$$
\HH(0,T) =
\left\{\!\begin{alignedat}{2}
& H^\al(0,T), & \quad & 0<\al<\f12, \\
& \left\{w\in H^{\f12}(0,T);\,\int^T_0\f{|w(t)|^2}t\,\rd t<\infty\right\},
& \quad & \al=\f12,\\
& \{w\in H^\al(0,T);\,w(0)=0\}, & \quad & \f12 < \al < 1
\end{alignedat}\right.
$$
and
$$
\| w\|_{\HH(0,T)} =
\left\{\! \begin{alignedat}{2}
& \|w\|_{H^\al(0,T)}, & \quad & \al \ne \f12, \\
& \left(\|w\|_{H^{\f12}(0,T)}^2 + \int^T_0 \f{|w(t)|^2}t\,\rd t\right)^{\f12},
& \quad & \al=\f12.
\end{alignedat}\right.
$$
Then it is known that (see e.g. \cite{GLY,KRY21})
$$
\HH(0,T) = J^\al L^2(0,T),\quad 0<\al<1.
$$
The minimum closed extension $\paa$ of the operator $\ddda$
with the domain $\cD(\ddda) =\, {\BCC[0,T]}$ satisfies
$$
\paa = (J^\al)^{-1}, \quad \cD(\paa) =\HH(0,T)
$$
and there exists
a constant depending only on $\al$ such that
$$
C^{-1}\| w\|_{\HH(0,T)} \le \| \paa w\|_{L^2(0,T)}
\le C\| w\|_{\HH(0,T)} \quad \mbox{for all $w\in\HH(0,T)$}
$$
(see Gorenflo, Luchko and Yamamoto \cite{GLY},
Kubica, Ryszewska and Yamamoto \cite{KRY21}, Yamamoto \cite{Ya22}).

Throughout this article, instead of the time-fractional
diffusion equation \eqref{eq-1.1}, we consider
\begin{equation}\label{eq-1.2}
\paa y(x,t) - y_{x x}(x,t) + p(x)y(x,t) = \rho(t)f(x) \quad
\mbox{in }(0,1) \times (0,T).
\end{equation}
Here $\rho$ and $f$ stand for the temporal and spatial components of the
source term, respectively.

In the sequel, we set $F(x,t) = \rho(t)f(x)$ or
$F\equiv 0$, and
$$
a\in L^2(0,1), \quad F \in L^2(0,T;L^2(0,1)),\quad
p\in L^\infty(0,1).
$$
In general, we define a solution to
a time-fractional diffusion equation with initial value
$a \in L^2(0,1)$ as follows:
\begin{equation}\label{1.3}
\paa (u-a)(x,t) - u_{x x}(x,t) + p(x)u(x,t) = F(x,t) \quad
\mbox{in $L^2(0,T;H^{-1}(0,1))$}
\end{equation}
and
\begin{equation}\label{1.4}
u-a \in\HH(0,T;H^{-1}(0,1)), \quad
u \in L^2(0,T;H^1(0,1)),
\end{equation}
where $H^{-1}(0,1) = (H^1_0(0,1))'$ is the dual space of $H^1_0(0,1)$.
Here we remark that
$u \in L^2(0,T;H^1(0,1))$ implies $u_{x x}
\in L^2(0,T;H^{-1}(0,1))$.  Indeed, for almost all $t \in (0,T)$,
since $u_x(\,\cdot\,,t)\in L^2(0,1)$, we see
$$
{}_{H^{-1}(0,1)}\langle u_{x x}(\,\cdot\,,t), \phi\rangle_{H^1_0(0,1)}
= -(u_x,\phi_x)_{L^2(0,1)} \quad \mbox{for all
$\phi \in H^1_0(0,1)$},
$$
and
$$
|{}_{H^{-1}(0,1)}\langle u_{x x}(\,\cdot\,,t), \phi\rangle_{H^1_0(0,1)}|
\le \| u_x(\,\cdot\,,t)\|_{L^2(0,1)}\| \phi\|_{H^1_0(0,1)}.
$$
Therefore, $u_{x x}(\,\cdot\,,t)$, whose derivative is
taken in the sense of distribution,
can define a bounded linear functional on $H^1_0(0,1)$, that is,
$u_{x x}(\,\cdot\,,t) \in H^{-1}(0,1)$.

This class defined by \eqref{1.4} is compatible with the function space for the
initial-boundary value problem. For example, attaching \eqref{1.3}
with the boundary condition $u(0,t) = u(1,t) = 0$ for $0<t<T$,
if $F \in L^2(0,T;H^{-1}(0,1))$,
then we can prove the unique existence of solution to the initial
boundary value problem within the above class
(e.g., \cite{KRY21}).

Since $\HH(0,T)$ is the closure of the set $\BCC[0,T]$ of
$C^1$-functions vanishing at $t=0$ by the norm of
$H^\al(0,T)$, we can interpret the first regularity condition in \eqref{1.4}
as generalized initial condition.
In particular, for $\f12 < \al < 1$,
in view of the Sobolev embedding
$\HH(0,T) \subset H^\al(0,T)\subset C[0,T]$,
if $u-a \in\HH(0,T;H^{-1}(0,1))$, then $u-a \in C([0,T];H^{-1}(0,1))$,
and so $u$ satisfies the initial condition
$u(\,\cdot\,,0) = a$ in $H^{-1}(0,1)$.

Throughout this article, we assume that a solution $y$ to \eqref{eq-1.2} satisfies
\begin{equation}\label{eq-1.3}
y \in \HH(0,T;H^1(0,1)).
\end{equation}
We recall that we consider the zero initial condition in the sense
of $y \in\HH(0,T;H^1(0,1))$.

In addition to the regularity \eqref{1.4}, for any non-empty open interval
$I$ such that $\ov I \subset (0,1)$, we can prove
\begin{equation}\label{1.6}
J^\al u \in L^2(0,T;H^2(I)).
\end{equation}
In particular, the trace theorem yields
$$
(J^\al u)_x(x_0,\cdot\,) = (J^\al u_x)(x_0,\cdot\,)
\in L^2(0,T)
$$
for arbitrary $x_0 \in (0,1)$.  For completeness, we  provide the proof of
\eqref{1.6} in Appendix \ref{sec-app}.

For \eqref{eq-1.2}, our target of this article is the uniqueness for the
following inverse source problem:

\begin{prob}
Fix constants $T>0$ and $x_0\in (0,1)$. Let $y\in \HH(0,T;H^1(0,1))$ satisfy
\eqref{eq-1.2} with $p\in L^\infty(0,1)$.
Can we uniquely determine $f\in L^2(0,1)$ by data
$y(x_0,\cdot\,)$ and $(J^\al y)_x(x_0,\cdot\,)$ in $(0,T),$
provided that $\rho$ is given suitably$?$
\end{prob}

By \eqref{1.6} and the trace theorem, we note that the data
$(J^\al y)_x(x_0,\cdot\,)$ can make sense as function in $L^2(0,T)$.
For a sufficiently smooth initial value $a$, one can observe
$y(x_0,t)$ and $y_x(x_0,t)$ in real applications, which means the data
of the concentration and its rate of change at a single point. For
$a\in L^2(0,1)$, $y_x(x_0,t)$ does not make sense in $L^2(0,T)$, and
more practical observation data can be taken in $I\times(0,T)$ with small
open interval $I$ including $x_0$.

In the above problem, by the term $\paa y$, we emphasize that we can consider the zero initial value.
On the other hand, boundary values are unknown in
the above problem, and we just treat any
function $y \in \HH(0,T;H^1(0,1))$
satisfying \eqref{eq-1.2}. Therefore, the above problem
requires the unique determination of $f(x)$ without data
on the lateral boundary $\{0,1\} \times (0,T)$.
Our first main result is concerned with such sharp uniqueness:

\begin{thm}\label{thm-isp}
Fix constants $T>0$ and $x_0\in (0,1)$ arbitrarily.
We assume that $y\in \HH(0,T;H^1(0,1))$
satisfies \eqref{eq-1.2} with $p\in L^\infty(0,1),$ $f\in L^2(0,1)$
and $\rho\in H^1(0,T)$ satisfying $\rho(0)\ne0$. Then
$J^\al y(x_0,\cdot\,)=(J^\al y)_x(x_0,\cdot\,)=0$
in $(0,T)$ implies $f\equiv0$ in $(0,1)$.
\end{thm}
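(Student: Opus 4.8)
The plan is to pass to the frequency domain, converting the time-fractional PDE into a one-parameter family of ordinary differential equations in $x$ for which the vanishing Cauchy data at $x_0$, confronted with the asymptotics in the transform parameter, forces $f$ to vanish; this asymptotic rigidity is precisely how unique continuation manifests itself in the present one-dimensional setting.

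First I would apply the Laplace transform in $t$. Since $y\in\HH(0,T;H^{-1}(0,1))$ encodes the homogeneous initial value, one has $\wh{\paa y}(x,s)=s^\al\wh y(x,s)$ and $\wh{J^\al y}(x,s)=s^{-\al}\wh y(x,s)$, so the hypotheses $J^\al y(x_0,\cdot)=J^\al y_x(x_0,\cdot)=0$ become $\wh y(x_0,s)=\wh y_x(x_0,s)=0$ for the relevant range of $s$. Transforming \eqref{eq-1.2} then gives, for each fixed $s$, the second-order ODE $-\wh y_{xx}(x,s)+(s^\al+p(x))\wh y(x,s)=\wh\rho(s)f(x)$ on $(0,1)$ with zero Cauchy data at $x_0$. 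For each $s$ I would solve this as an initial value problem in $x$ issuing from $x_0$: variation of parameters yields $\wh y(x,s)=\wh\rho(s)\int_{x_0}^x G(x,\xi;s^\al)f(\xi)\,\rd\xi$, where the Green kernel $G$ is built from the fundamental system of $-w''+(s^\al+p)w=0$. With $p\in L^\infty(0,1)$ handled as a bounded perturbation via a Gronwall estimate on the fundamental solutions, $G(x,\xi;\mu)$ is controlled by $\sinh(\sqrt\mu\,|x-\xi|)/\sqrt\mu$ as $\mu=s^\al\to+\infty$.

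The decisive step compares two independent pieces of information about $\wh y$ as $s\to+\infty$ along the real axis. On one side, the regularity of $y$ together with $\rho(0)\ne0$, which forces the near-$t=0$ behavior $y(x,t)\sim c\,\rho(0)f(x)t^\al$ (so $\wh y(x,s)=O(s^{-1-\al})$ while $\wh\rho(s)\sim\rho(0)/s$), precludes any exponential growth in $s$. On the other side, the representation of $\wh y$ carries the factor $\e^{\sqrt{s^\al}\,|x-x_0|}$, whose contribution can be compatible with the polynomial bound, for every $x$, only if $f$ vanishes identically on $(x_0,1)$ and, by the same reasoning run towards the left endpoint, on $(0,x_0)$. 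Making this cancellation rigorous is exactly where I would invoke the unique continuation for weak solutions; it upgrades ``the transform has no exponential growth'' to the pointwise conclusion $f\equiv0$ on $(0,1)$.

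The main obstacle I anticipate is twofold. First, the Laplace transform is naturally an object on $(0,\infty)$, whereas the data live only on $(0,T)$; legitimizing the transform within the weak class \eqref{eq-1.3}, or equivalently replacing this step by a direct appeal to the weak unique continuation theorem on $(0,T)$, is the delicate point, and is precisely where the domain characterization $\cD(\paa)=\HH(0,T)$ and the regularity \eqref{1.6} must be brought to bear. Second, turning the absence of exponential growth of the transform into the statement $f\equiv0$ is a Paley--Wiener/quasi-analyticity argument that must be made to accommodate the merely $L^\infty$ potential $p$, and it is this final conversion—rather than the formal manipulations above—that constitutes the genuine content of the theorem.
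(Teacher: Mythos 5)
There is a genuine gap, and it sits at the very first step. You assert that the hypotheses $J^\al y(x_0,\,\cdot\,)=J^\al y_x(x_0,\,\cdot\,)=0$ ``become $\wh y(x_0,s)=\wh y_x(x_0,s)=0$,'' but the data vanish only on the finite window $(0,T)$, while the Laplace transform integrates over $(0,+\infty)$. The tail $\int_T^{+\infty}$ is unknown, and a priori nothing prevents the solution from growing there; so the transformed Cauchy data at $x_0$ are not zero but only of size $\e^{-s T}$ times an uncontrolled quantity. This is precisely the difficulty the paper spends most of its effort on: in the proof of Theorem \ref{thm-Cauchy} it first mollifies by $J^{m\al}$ to gain regularity, localizes to $(\de,x_0)$ so that the Neumann trace $g=v_x(\de,\,\cdot\,)$ is in $C^2[0,T]$, extends $g$ to $G\in C^2[0,+\infty)$, solves an auxiliary initial-boundary value problem for $V$ on all of $t>0$ with the quantitative bound $\|V(\,\cdot\,,t)\|_{C[\de,x_0]}\le C\,\e^{C t}$, and only then takes the Laplace transform; afterwards the leftover tail terms $\exp(C s^{\f\al2})\,\e^{-s T}$ are beaten down by a Phragm\'en--Lindel\"of argument. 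Your proposal skips this entire mechanism, and your substitute bound $\wh y(x,s)=O(s^{-1-\al})$ rests on a pointwise short-time asymptotic $y(x,t)\sim c\,\rho(0)f(x)t^\al$ that is not available for weak solutions with merely $f\in L^2(0,1)$ and unknown lateral boundary values.

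A second, related gap: you never reduce the inhomogeneous problem to a statement to which unique continuation actually applies. The paper's route is Duhamel's principle (Lemma \ref{lem-Duhamel}): $y(\,\cdot\,,t)=\int_0^t\mu(t-\tau)u(\,\cdot\,,\tau)\,\rd\tau$ with $J^{1-\al}\mu=\rho$, where $u$ solves the homogeneous equation \eqref{eq-v-f} with \emph{initial value} $f$; the vanishing data then transfer from $y$ to $u$ through a Volterra equation of the second kind, using $\rho(0)\ne0$ and Gr\"onwall's inequality, and Theorem \ref{thm-Cauchy} yields $u\equiv0$, hence $f\equiv0$ as the hidden initial value. In your sketch $\rho(0)\ne0$ enters only through heuristic asymptotics of $\wh\rho$, and the final conversion from ``no exponential growth of the transform'' to $f\equiv0$ is labelled a Paley--Wiener/quasi-analyticity step but never executed; in the paper this content is the entire-function argument of Steps 3--5: $F(z)=\int_\de^{x_0}a(x)\vp(x,z)\,\rd x$ is shown to be a polynomial by the generalized Liouville theorem, then identically zero via the Riemann--Lebesgue lemma applied to $\vp(x,z)=\cos(z(x-\de))+O(|z|^{-1})$, and finally $a\equiv0$ by completeness of the Neumann eigenfunctions, with $p\in L^\infty(0,1)$ handled by the Levitan--Sargsian asymptotics of \eqref{eq-y} rather than a Gronwall perturbation. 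Your outline points at the right landmarks, but the two load-bearing steps --- coping with the finite observation window and carrying out the rigidity argument --- are exactly what is missing.
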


Since $y(x_0,\cdot\,) \in L^2(0,T)$, we see that
$y(x_0,\cdot\,)=0$ in $(0,T)$ if and only if
$J^\al y(x_0,\cdot\,)=0$ in $(0,T)$.

With arbitrarily chosen point $x_0 \in (0,1)$,
only two $t$-dependent functions $y(x_0,\cdot\,)$ and
$(J^\al y)_x(x_0,\cdot\,)$ are available for determining $f$.
In particular, we do not need the boundary values,
which are required in most literature.
This turns out to be novel compared with all existing results on inverse
problems for time-fractional diffusion equations (see \cite{LLY19} and
the references therein).

On the other hand, let us consider
$$
y(x,t)=\f1{4\pi^2}(E_{\al,1}(-4\pi^2t^\al)-1)\sin2\pi x,\quad0<x<1,\ t>0,
$$
where $E_{\al,\be}(z):=\sum_{k=0}^\infty\f{z^k}{\Ga(\al k+\be)}$ ($\be>0$)
is the Mittag-Leffler function defined for $z\in\BC$. It is known that
$E_{\al,1}(z)$ is an entire function in $z\in\BC$ and
$\pa_t^\al(E_{\al,1}(-4\pi^2t^\al)-1)=-4\pi^2E_{\al,1}(-4\pi^2t^\al)$ for $t>0$
(e.g., \cite{P99}). Then we can directly verify that
$$
\pa_t^\al y(x,t)-y_{x x}(x,t)=\rho_0(t)f_0(x),\quad0<x<1,\ t>0,
$$
where $\rho_0(t)=-1$ and $f_0(x)=\sin2\pi x$, and that $y(\f12,t)=0$ for $t>0$
in spite of $f_0\not\equiv0$. This example indicates that only data
$y(x_0,\cdot\,)$ does not yield the uniqueness even though we have the zero
boundary data $y(0,t)=y(1,t)=0$ for $t>0$. In this sense, in Theorem \ref{thm-isp},
data $y(x_0,\cdot\,)$ and $y_x(x_0,\cdot\,)$ can be considered as the minimum
possible for the uniqueness.

In order to prove Theorem \ref{thm-isp}, we need the
uniqueness for the lateral Cauchy problem for the homogeneous equation
with non-zero initial value $a$:
\begin{equation}\label{eq-1.4}
\pa_t^\al (u(x,t)- a(x)) -u_{x x}(x,t) + p(x)u(x,t)=0
\quad\mbox{in }(0,1)\times(0,T).
\end{equation}
Thus we can state our second main result:

\begin{thm}\label{thm-Cauchy}
Fix constants $T>0$ and $x_0 \in (0,1)$, $p\in L^{\infty}(0,1)$
arbitrarily.
Let $u\in L^2(0,T;H^1(0,1))$ satisfy $u-a \in\HH(0,T;H^{-1}(0,1))$ and
\eqref{eq-1.4} with some $a\in L^2(0,1)$. Then
$J^\al u(x_0,\cdot\,) = J^\al u_x(x_0,\cdot\,)=0$ in $(0,T)$
implies $u = 0$ in $(0,1)\times(0,T)$ and $a=0$ in $(0,1)$.
\end{thm}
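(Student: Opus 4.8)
The plan is to transfer the Cauchy conditions, which are stated for $J^\al u$, onto a genuine solution of a fractional diffusion equation, and then to convert the lateral Cauchy problem into an ordinary differential equation in $x$ by means of the Laplace transform in $t$. First I would set $v := J^\al u$. Since $J^\al$ acts only in $t$, it commutes with $\pa_x$ and satisfies $J^\al(pu) = p\,J^\al u$, while $J^\al\paa(u-a) = u-a$ because $u-a \in \HH(0,T;H^{-1}(0,1))$ and $\paa = (J^\al)^{-1}$ there. Applying $J^\al$ to \eqref{eq-1.4} and using $u = \paa v$ therefore gives
\[
\paa v - v_{xx} + p\,v = a \quad\text{in }(0,1)\times(0,T),
\]
where now $v \in \HH(0,T;H^1(0,1))$ has vanishing generalized initial value and, by \eqref{1.6} together with the trace theorem and the hypothesis, $v(x_0,\cdot) = v_x(x_0,\cdot) = 0$ in $(0,T)$. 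Because $\paa$ is injective on $\HH(0,T)$, proving $v \equiv 0$ is equivalent to the desired conclusion $u = \paa v \equiv 0$. Note that the initial value $a$, originally hidden inside $\paa(u-a)$, has now surfaced as an honest spatial source.

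Next I would apply the finite Laplace transform $\wt v(x,s) := \int_0^T \e^{-st} v(x,t)\,\rd t$ in $t$. Using $v(x,0)=0$ and an integration by parts, the transform of $\paa v$ equals $s^\al\,\wt v(x,s)$ up to correction terms carrying a factor $\e^{-sT}$, so that $\wt v$ solves the second-order ODE
\[
\wt v_{xx} = (s^\al + p(x))\,\wt v - \f{a(x)}s + \wt E(x,s),\qquad \wt v(x_0,s) = \wt v_x(x_0,s)=0,
\]
in which $\wt E(x,s) = O(\e^{-sT}s^N)$ is exponentially small as $s \to +\infty$. The gain of the reduction is that the nonlocal-in-$t$ operator $\paa$ has become multiplication by $s^\al$, and only the source $-a/s$ records the initial value.

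The core of the argument is an asymptotic dichotomy in $s$. Solving the spatial Cauchy problem from $x_0$ by variation of parameters gives, for $p\equiv0$ and up to the harmless term $\wt E$,
\[
\wt v(x,s) = -\f1s \int_{x_0}^x \f{\sinh\!\big(s^{\al/2}(x-\xi)\big)}{s^{\al/2}}\, a(\xi)\,\rd\xi,
\]
and a WKB ansatz handles general $p \in L^\infty$ with the same leading rate $\exp(s^{\al/2}|x-\xi|)$. On the one hand, since $v(x,\cdot)\in L^2(0,T)$, the transform obeys $|\wt v(x,s)| \le C_x s^{-1/2}$, hence is at most polynomially large as $s\to+\infty$. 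On the other hand, a Watson-type evaluation of the integral shows that the right-hand side grows like $\exp(s^{\al/2}(x-x_0))$ unless the source is annihilated. Quantitatively, fixing $x>x_0$ and reading the inner integral as a Laplace transform in the spatial variable $\xi$ with parameter $\kappa=s^{\al/2}$, the polynomial bound forces $\int_{x_0}^{x} \e^{-\kappa\xi} a(\xi)\,\rd\xi = O(\e^{-\kappa x} s^{N})$; by the Paley--Wiener description of the support of a compactly supported $L^2$ function through the indicator of its entire Laplace transform, this is possible only if $a\equiv0$ on $(x_0,x)$. Letting $x\uparrow1$, and arguing symmetrically on $(0,x_0)$, yields $a\equiv0$ on $(0,1)$; the ODE then reduces to the homogeneous one with zero Cauchy data, so $\wt v\equiv0$ for every $s$, whence $v\equiv0$ and $u=\paa v\equiv0$.

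The step I expect to be the main obstacle is the passage from the asymptotic inequality to $a\equiv0$: one must (i) keep rigorous track of the finite-interval correction $\wt E$ and of the variable-coefficient WKB exponent $\int_{x_0}^x\sqrt{s^\al+p}\,\rd\xi$, verifying that neither alters the leading rate $s^{\al/2}|x-x_0|$, and (ii) upgrade the pointwise heuristic ``$a(x_0)\ne0$ forces growth'' into a statement valid for a merely $L^2$ source, for which the cleanest route is the indicator-function (Phragm\'en--Lindel\"of / Paley--Wiener) description of the support rather than a naive stationary-phase estimate. By contrast, the boundedness $|\wt v(x,s)|\le C_x s^{-1/2}$, the injectivity of $J^\al$ and of $\paa$ on $\HH(0,T)$, and the reduction of the first paragraph are comparatively routine.
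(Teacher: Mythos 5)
Your overall engine coincides with the paper's: transform in time so that $\paa$ becomes multiplication by $s^\al$, read the lateral data as a spatial Cauchy problem at $x_0$, exploit the dichotomy between polynomial boundedness of the transform and the competing exponentials $\e^{s^{\al/2}}$ versus $\e^{-sT}$ (the inequality $\al<1$ is exactly what makes their product decay), and close with a Phragm\'en--Lindel\"of--Liouville argument. Within that shared frame you make two genuinely different technical choices. First, you use the finite Laplace transform on $(0,T)$ with $O(\e^{-sT})$ truncation errors, whereas the paper, in order to transform over all of $\BR_+$, first regularizes by $J^{m\al}$ with $(m-1)\al>\f52$ to gain $C^2$-in-time traces, introduces an artificial boundary $\de$, extends the Neumann trace $g$ to $G$, and solves and estimates an auxiliary initial-boundary value problem \eqref{eq-3.12} (Lemmata \ref{lem-SK}--\ref{lem-W}); your one-sided integration of the spatial ODE from $x_0$, where \emph{both} Cauchy data vanish, makes all of that machinery unnecessary for the determination of $a$. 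Second, you identify $a$ through the support of its entire Laplace transform in the spatial variable, where the paper instead forms $F(z)=\int_\de^{x_0}a(x)\vp(x,z)\,\rd x$, proves it is a polynomial, kills it by Riemann--Lebesgue, and then uses completeness of the Neumann eigenfunctions. Your ``Paley--Wiener'' step can indeed be made rigorous with the same two complex-analysis lemmas the paper uses, and your WKB input for $p\in L^\infty$ is precisely Lemma \ref{lem-asymp} (Levitan--Sargsian) anchored at $x_0$ instead of $\de$.

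There is, however, a genuine gap at the very end. The sentence ``the ODE then reduces to the homogeneous one with zero Cauchy data, so $\wt v\equiv0$ for every $s$'' is a non sequitur: at each \emph{fixed} $s$ the transformed equation is not homogeneous, because the truncation term $\wt E(x,s)$ (the $t=T$ boundary contributions and the error from cutting the convolution kernel at $T$) does not vanish; it is only small as $s\to+\infty$. Hence after $a\equiv0$ you obtain merely $|\wt v(x,s)|\le C s^N\e^{s^{\al/2}}\e^{-sT}$ for large real $s$, and from this decay you must still extract $v(x,\cdot)=0$ on $(0,T)$. That requires one more support argument of the same type: writing $\wt v(x,s)=\e^{-sT}\int_0^T\e^{s(T-t)}v(x,t)\,\rd t$, one checks for each $\ve>0$ that the entire function $z\mapsto\int_0^{T-\ve}v(x,T-\ve-\tau)\,\e^{z\tau}\,\rd\tau$ is bounded on the real and imaginary axes and of exponential type, so Phragm\'en--Lindel\"of on the four quadrants and Liouville's theorem force it to be the zero constant, giving $v(x,\cdot)=0$ on $(0,T-\ve)$ for every $\ve$. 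This is precisely the paper's closing Step 5, where even after $a\equiv0$ it must still prove $g\equiv0$ by such an argument before invoking uniqueness of the forward problem; your proof is incomplete without the analogous step. A smaller but real obligation: for $\al\le\f12$ elements of $\HH(0,T)$ have no pointwise value at $t=0$, so the reduction ``$v(x,0)=0$ plus integration by parts'' should be replaced by the identity $v=J^\al u$ together with a Fubini computation of the truncated convolution, which honestly yields the claimed $O(s^{\al-1}\e^{-sT})$ error in the relation between $\wt u$ and $s^\al\wt v$.
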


Here by \eqref{1.6}, we note that $J^\al u \in L^2(0,T;H^2(I))$ with
open interval $I$ such that $\ov I \subset (0,1)$ and so again
$J^\al u_x(x_0,\cdot\,) \in L^2(0,T)$.

In Theorem \ref{thm-Cauchy}, we notice that not only boundary values but also
an initial value $a$ are unknown, and the theorem concludes $a=0$ in $(0,1)$
as well as  $u=0$ in $(0,1) \times (0,T)$.
The same result holds for one-dimensional parabolic equations,
which is an immediate corollary of the well-known uniqueness of the lateral
Cauchy problem (see e.g.\! Isakov \cite{I06}) by treating $(0,x_0)\times(0,T)$ and
$(x_0,1)\times(0,T)$ separately.

The observation point $x_0$ in Theorems \ref{thm-isp}--\ref{thm-Cauchy} is
restricted to the open interval $(0,1)$ because the proof of Theorem \ref{thm-Cauchy}
relies on the interior regularity theory in Gilbarg and Trudinger \cite{GT}. The
generalization to allowing a boundary point $x_0\in\{0,1\}$ seems not trivial and
we will not discuss this issue in this article.

The following corollary is an immediate consequence of Theorem
\ref{thm-Cauchy}.

\begin{coro}[classical unique continuation]\label{thm-ucp}
Choose a constant $T>0$ and a nonempty open interval $I\subset(0,1)$
arbitrarily.
Let $u\in L^2(0,T; H^1(0,1))$ satisfy $u-a \in\HH(0,T;$ $H^{-1}(0,1))$
and \eqref{eq-1.4} with $p\in L^\infty(0,1)$. Then
$u=0$ in $I\times(0,T)$ implies $u\equiv0$ in $(0,1)\times(0,T)$.
\end{coro}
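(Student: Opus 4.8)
The plan is to reduce the corollary directly to Theorem \ref{thm-Cauchy} by verifying that the hypothesis $u=0$ in $I\times(0,T)$ forces the vanishing of the lateral Cauchy data $J^\al u(x_0,\,\cdot\,)$ and $J^\al u_x(x_0,\,\cdot\,)$ at some interior point $x_0\in I$. Once this is established, the conclusion is immediate since all the structural hypotheses of Theorem \ref{thm-Cauchy} are inherited verbatim from the corollary.

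First I would fix an arbitrary point $x_0\in I$; since $I$ is open, there is a nondegenerate subinterval $(x_0-\de,x_0+\de)\subset I$ on which $u$ vanishes. For almost every $t\in(0,T)$ we have $u(\,\cdot\,,t)\in H^1(0,1)$ with $u(\,\cdot\,,t)\equiv0$ on $I$; because a function in $H^1(0,1)$ that vanishes on an open subinterval has its weak derivative vanishing there as well, it follows that $u_x(\,\cdot\,,t)=0$ a.e.\ on $I$. Thus both $u$ and $u_x$ vanish identically on $I\times(0,T)$, so in particular the traces satisfy $u(x_0,\,\cdot\,)=0$ and $u_x(x_0,\,\cdot\,)=0$ in $L^2(0,T)$.

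Next I would apply the Riemann-Liouville integral $J^\al$ and use \eqref{1.6}. Since $J^\al$ acts only in the time variable, it commutes with the spatial trace and with $\pa_x$: by \eqref{1.6} we have $J^\al u\in L^2(0,T;H^2(I))$ and $(J^\al u)_x = J^\al u_x$, so that both $J^\al u(x_0,\,\cdot\,)$ and $J^\al u_x(x_0,\,\cdot\,)$ make sense in $L^2(0,T)$. Consequently $J^\al u(x_0,\,\cdot\,)=J^\al(u(x_0,\,\cdot\,))=0$ and $J^\al u_x(x_0,\,\cdot\,)=J^\al(u_x(x_0,\,\cdot\,))=0$ in $(0,T)$.

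Finally I would invoke Theorem \ref{thm-Cauchy} with this $x_0$. The regularity hypotheses $u\in L^2(0,T;H^1(0,1))$ and $u-a\in\HH(0,T;H^{-1}(0,1))$ together with \eqref{eq-1.4} are carried over directly, and we have just shown $J^\al u(x_0,\,\cdot\,)=J^\al u_x(x_0,\,\cdot\,)=0$ in $(0,T)$. Theorem \ref{thm-Cauchy} then yields $u\equiv0$ in $(0,1)\times(0,T)$, which is the desired conclusion. There is essentially no analytic obstacle here; the only point requiring a line of care is the passage from the vanishing of $u$ on the slab $I\times(0,T)$ to the vanishing of both Cauchy traces at $x_0$, which is clean precisely because $u$ is zero on an entire $x$-neighborhood of $x_0$ rather than merely at the single point $x_0$.
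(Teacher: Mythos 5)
Your proposal is correct and matches the paper exactly: the paper states Corollary \ref{thm-ucp} as an immediate consequence of Theorem \ref{thm-Cauchy}, which is precisely your reduction. Your one substantive verification --- that $u=0$ on the open slab $I\times(0,T)$ forces $u_x=0$ a.e.\ there, so that via \eqref{1.6} both $J^\al u(x_0,\,\cdot\,)$ and $J^\al u_x(x_0,\,\cdot\,)$ vanish at any $x_0\in I$ --- is sound and is the only detail the paper leaves implicit.
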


In the case of $p\equiv 0$, the uniqueness as in Theorem \ref{thm-Cauchy} and
Corollary \ref{thm-ucp} was proved in Li and Yamamoto \cite{LY-FCAA}.
The proof in \cite{LY-FCAA} is not applicable directly to our case
$p \not\equiv 0$, and moreover \cite{LY-FCAA} requires the higher
regularity for the solution $u$.

As important contribution to the unique continuation for time-fractional
partial differential equations, we refer to
Lin and Nakamura \cite{LN21}, which proves the uniqueness
for more general time-fractional partial differential equations with order
$\al \in (0,1) \cup (1,2)$ in a bounded domain $\Om \subset
\BR^d$.
In \cite{LN21}, the unique continuation was proved under assumptions
that the coefficients of the equations are in $C^\infty$-class and
solutions $u$ are strong solutions, that is, satisfy
$$
u \in L^2(0,T;H^2(\Om)) \cap H^\al(0,T;L^2(\Om)),
$$
while we establish the unique continuation within the weaker regularity
\eqref{1.4}.
Moreover, for $0<\al<\f12$, the space $H^\al(0,T;L^2(\Om))$ is not
in $C([0,T];L^2(\Om))$ and so the initial condition requires special cares
even if initial values are known. In particular, the regularity
$H^\al(0,T;L^2(\Om))$ itself does not justify any initial conditions for $\al<\f12$.
As for the formulation of initial condition, we remark that in \cite{LN21},
the fractional derivative $\paa$ is defined through some
extension in $t$ of $u$ from $0<t<T$ to $-\infty<t<\infty$,
and their formulation is not the same as ours.
Thus we should understand that our result and the one in \cite{LN21} are
independent, as long as we are limited to the one-dimensional case.

The remainder of this article is organized as follows. In Section
\ref{sec-prelim},
we collect some necessary ingredients for proving the main results. Then
Sections \ref{sec-proof} and \ref{sec-isp} are devoted to the proofs of
Theorems \ref{thm-Cauchy} and \ref{thm-isp}, respectively.
Concluding remarks will be provided in Section \ref{sec-rem},
and Appendix \ref{sec-app} is devoted to the verifications of
some technical details.

\section{Preliminaries}\label{sec-prelim}

First we fix some frequently used notations. We set $\BR_+:=(0,+\infty)$ and
denote the Laplace transform of $w\in L_{\mathrm{loc}}^1(\BR_+)$ by
\[
\wh w(s):=\int_{\BR_+}w(t)\,\e^{-s t}\,\rd t,\quad s>s_0,
\]
provided that the integral converges for some constant $s_0 > 0$.
Henceforth we write
$\vp'(x) = \f{\rd\vp}{\rd x}(x)$, etc.\! if there is no fear of confusion.

Let $\de > 0$ be arbitrarily fixed.
For $z\in\BC$, by $\vp(x,z)$ we denote the solution to the following initial
value problem for a second order ordinary differential equation
\begin{equation}\label{eq-y}
\begin{cases}
-\vp''+p(x)\vp=z^2\vp, & x>\de,\\
\vp(\de)=1,\ \vp'(\de)=0.
\end{cases}
\end{equation}
It is known (e.g., \cite[Theorem 1.1]{lev})
that the solution $\vp$ is analytic with respect to the parameter $z\in\BC$.
Moreover, we also have the following asymptotic formulae for $\vp(x,z)$ as
$|z|\to\infty$.

\begin{lem}\label{lem-asymp}
The solution $\vp(x,z)$ to \eqref{eq-y} admits the asymptotic estimate
\begin{equation}\label{esti-asymp0}
\vp(x,z)=O(\e^{|\rIm\,z|x})\quad(|z|\to\infty),
\end{equation}
and more precisely
\begin{equation}\label{esti-asymp}
\vp(x,z)=\cos(z (x-\de))+O\left(\f{\e^{|\rIm\,z|x}}{|z|}\right)
\quad \mbox{as $|z|\to\infty$}.
\end{equation}
Moreover$,$ if $\arg\,z=\pm\f\pi2,$ then there exists $\eta_0>0$
such that for $|z|>\eta_0,$ there holds
\begin{equation}\label{esti-phi'}
|\vp'(x,z)|\le C|z|\,\e^{|z|},\quad x>\de.
\end{equation}
\end{lem}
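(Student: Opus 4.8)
The plan is to recast the initial value problem \eqref{eq-y} as a Volterra integral equation by treating the potential term $p(x)\vp$ as a perturbation of the free equation $-\vp''=z^2\vp$. Since $\cos(z(x-\de))$ and $z^{-1}\sin(z(x-\de))$ form a fundamental system for $\vp''+z^2\vp=0$ with unit Wronskian, variation of parameters converts \eqref{eq-y} into
\[
\vp(x,z)=\cos(z(x-\de))+\f1z\int_\de^x\sin(z(x-\xi))\,p(\xi)\,\vp(\xi,z)\,\rd\xi,
\]
where the first term carries the initial data $\vp(\de)=1$, $\vp'(\de)=0$. All three estimates will follow from this single representation.

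First I would establish the rough bound \eqref{esti-asymp0}. Using the elementary inequalities $|\cos w|,|\sin w|\le\e^{|\rIm\,w|}$ for $w\in\BC$ together with $|\rIm(z(x-\xi))|\le|\rIm\,z|\,(x-\xi)$, the integral equation yields
\[
|\vp(x,z)|\,\e^{-|\rIm\,z|x}\le1+\f{\|p\|_{L^\infty(0,1)}}{|z|}\int_\de^x|\vp(\xi,z)|\,\e^{-|\rIm\,z|\xi}\,\rd\xi.
\]
Setting $u(x):=|\vp(x,z)|\,\e^{-|\rIm\,z|x}$ and applying Gronwall's inequality gives $u(x)\le\exp(\|p\|_{L^\infty(0,1)}(x-\de)/|z|)$, which is bounded uniformly for $|z|$ large and $x\in(\de,1)$. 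This proves $\vp(x,z)=O(\e^{|\rIm\,z|x})$.

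Next, to obtain the refined expansion \eqref{esti-asymp}, I would insert the bound $|\vp(\xi,z)|\le C\e^{|\rIm\,z|\xi}$ just obtained back into the integral term of the representation. Estimating as before, the exponentials combine to $\e^{|\rIm\,z|x}$ while the factor $1/z$ survives, so the integral term is $O(\e^{|\rIm\,z|x}/|z|)$, exactly the claimed remainder. For the derivative estimate \eqref{esti-phi'}, I would differentiate the representation in $x$; the Leibniz boundary term vanishes because $\sin(z(x-x))=0$, leaving
\[
\vp'(x,z)=-z\sin(z(x-\de))+\int_\de^x\cos(z(x-\xi))\,p(\xi)\,\vp(\xi,z)\,\rd\xi.
\]
Specializing to $\arg z=\pm\f\pi2$, where $|\rIm\,z|=|z|$, and using $x\le1$ so that $\e^{|\rIm\,z|x}\le\e^{|z|}$, the first term is bounded by $|z|\,\e^{|z|}$ and the integral by $C\e^{|z|}$; together these give $|\vp'(x,z)|\le C|z|\,\e^{|z|}$ for $|z|>\eta_0$.

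None of the steps presents a genuine obstacle, since this is a standard Liouville--Green perturbation argument. The only point requiring care is the uniform-in-$z$ bookkeeping of the exponential factors: one must track $|\rIm\,z|$ rather than $|z|$ throughout, so that the bounds remain sharp off the imaginary axis, and verify that the constants produced by Gronwall's inequality depend only on $\|p\|_{L^\infty(0,1)}$ and the length of the $x$-interval, not on $z$. The passage to the purely imaginary case in \eqref{esti-phi'} is then immediate because there $|\rIm\,z|=|z|$.
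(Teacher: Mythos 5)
Your proposal is correct, but it is organized differently from the paper. For the two asymptotic estimates \eqref{esti-asymp0}--\eqref{esti-asymp} the paper gives no argument at all: it simply cites \cite[Chapter 1, Lemma 2.1]{lev}, whose proof is precisely your Volterra representation
\[
\vp(x,z)=\cos(z(x-\de))+\f1z\int_\de^x\sin(z(x-\xi))\,p(\xi)\,\vp(\xi,z)\,\rd\xi
\]
combined with the Gronwall and bootstrap steps you carry out; so there your contribution is to make the citation self-contained, which is fine. For \eqref{esti-phi'} the two routes genuinely differ. The paper integrates the ODE itself once: for $z=\ri\,\eta$ it writes $\vp'(x,z)=\int(p(\xi)+\eta^2)\vp(\xi,z)\,\rd\xi$ (using $\vp'(\de)=0$) and inserts \eqref{esti-asymp0}, the factor $|z|$ arising as $\eta^2\int\e^{|\eta|\xi}\,\rd\xi\sim\eta^2\e^{|\eta|}/|\eta|$; this is a one-line trick but is tied to the already-established bound \eqref{esti-asymp0} and, as stated, works only on the purely imaginary rays where $z^2=-\eta^2$ is real and negative. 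You instead differentiate the Volterra representation, and since the boundary term vanishes you get $\vp'(x,z)=-z\sin(z(x-\de))+\int_\de^x\cos(z(x-\xi))\,p(\xi)\,\vp(\xi,z)\,\rd\xi$, which is valid for \emph{all} $z\in\BC$ and yields the stronger uniform bound $|\vp'(x,z)|\le C(1+|z|)\,\e^{|\rIm\,z|\,x}$ on bounded $x$-intervals, of which \eqref{esti-phi'} is the special case $\arg z=\pm\f\pi2$. Your bookkeeping is also slightly more careful than the paper's on two minor points: you make explicit that the final bound $\e^{|z|}$ requires $x$ to stay in a bounded interval (effectively $x\le1$, which is the only range the paper ever uses, though its statement says $x>\de$), and your use of $\vp'(\de)=0$ via the vanishing boundary term avoids the paper's harmless slip of integrating from $0$ rather than from $\de$.
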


\begin{proof}
The first two asymptotic estimates for $\vp(x,z)$ can be found in
\cite[Chapter 1, Lemma 2.1]{lev}. Then it remains to show the inequality
\eqref{esti-phi'}. Indeed, for $z=\ri\,\eta$ with $\eta\in\BR$, we integrate
both
sides of the governing equation in \eqref{eq-y} from $\de$ to $x$ and employ the
initial condition in \eqref{eq-y} to deduce
\[
\vp'(x,z)=\int_\de^x(p(\xi)+\eta^2)\vp(\xi,z)\,\rd\xi.
\]
Substituting the asymptotic estimate \eqref{esti-asymp0} for $\vp(x,z)$ into
the above equation yields
\[
|\vp'(x,z)|\le C\left(\|p\|_{L^\infty(0,1)}+\eta^2\right)
\int_0^x\e^{|\eta|\xi}\,\rd\xi=\f{C(\|p\|_{L^\infty(0,1)}+\eta^2)}{|\eta|}
\,\e^{|\eta|}\le C|\eta|\,\e^{|\eta|}
\]
for $|\eta|>\eta_0$, which completes the proof.
\end{proof}

Next, we recall two useful results from the complex analysis.

\begin{lem}[generalized Liouville's theorem]\label{lem-GL}
Assume that $f$ is an entire function and
there exist constants $N\in \BN$ and $R>0$ such that
$|f(z)|\le C|z|^N$ for $|z|>R$. Then $f$ is a polynomial of order
at most $N$.
\end{lem}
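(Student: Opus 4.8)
The plan is to invoke the Cauchy integral formula for the Taylor coefficients of $f$ together with the polynomial growth bound, which is the classical route to Liouville-type theorems. Since $f$ is entire, it admits a power series $f(z)=\sum_{k=0}^\infty a_k z^k$ that converges on all of $\BC$, and for every $k$ and every radius $r>0$ the coefficient is represented by
\[
a_k=\f{1}{2\pi\ri}\oint_{|z|=r}\f{f(z)}{z^{k+1}}\,\rd z.
\]

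First I would fix an arbitrary radius $r>R$ and use the hypothesis, which on the circle $|z|=r$ gives $|f(z)|\le C|z|^N=Cr^N$. Estimating the contour integral by the length $2\pi r$ of the circle times the maximum modulus of the integrand then yields the Cauchy estimate
\[
|a_k|\le\f{1}{2\pi}\cdot 2\pi r\cdot\f{Cr^N}{r^{k+1}}=C\,r^{N-k}.
\]

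Next, for each index $k>N$ I would let $r\to\infty$. Since then $N-k<0$, we have $r^{N-k}\to0$, and the bound forces $a_k=0$ for all $k>N$. Consequently the power series terminates, so $f(z)=\sum_{k=0}^N a_k z^k$ is a polynomial of degree at most $N$, which is the assertion.

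Since this is a textbook consequence of Cauchy's estimates, there is essentially no serious obstacle. The only point deserving minor care is bookkeeping the exponent of $r$: one must keep the factor $z^{k+1}$ in the denominator of the Cauchy formula so that the resulting bound is exactly $Cr^{N-k}$, and hence decays precisely in the regime $k>N$; an off-by-one slip here would spoil the vanishing of the critical coefficients.
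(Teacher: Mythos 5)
Your proof is correct and follows exactly the route the paper indicates: the paper gives no written proof but cites the series theory for analytic functions (Rudin, Theorem 10.22), which is precisely your argument via the Cauchy integral formula for Taylor coefficients, the estimate $|a_k|\le C\,r^{N-k}$ for $r>R$, and letting $r\to\infty$ to kill all coefficients with $k>N$. Your bookkeeping of the exponent is accurate, so nothing further is needed.
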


\begin{lem}[Phragm\'en-Lindel\"of principle]\label{lem-PL}
Fix constants $\te_2>\te_1$ and let $F$ be a holomorphic function in a
sector $S:=\{z\in\BC; \, \te_1<\arg z<\te_2\}$. Assume that $F$ is continuous
on the closure of $S$ and $|F|\le1$ on the boundary of $S$. If there exist
constants $\ga\in[0,\f\pi{\te_2-\te_1})$ and $C>0$ such that
\[
|F(z)|\le C\exp(C|z|^\ga),\quad\forall\,z\in S,
\]
then $|F|\le1$ in $S$.
\end{lem}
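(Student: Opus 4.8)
The plan is to run the classical auxiliary-multiplier argument. First I would normalize the sector by the rotation $z\mapsto\e^{-\ri(\te_1+\te_2)/2}z$, under which $S$ becomes the symmetric sector $\{z\in\BC;\ |\arg z|<\be\}$ with half-opening $\be=(\te_2-\te_1)/2$, while the growth exponent $\ga$ is unchanged and the hypothesis reads $\ga<\pi/(2\be)$, equivalently $\ga\be<\pi/2$. I would then fix an intermediate exponent $\ga'$ with $\ga<\ga'<\pi/(2\be)$, so that $\ga'\be<\pi/2$, and for each $\ve>0$ introduce the auxiliary function $g_\ve(z):=F(z)\exp(-\ve z^{\ga'})$, where $z^{\ga'}$ denotes the principal branch, which is holomorphic on $S$ since the opening is less than $2\pi$.

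The second step is to estimate $g_\ve$. Writing $z=r\e^{\ri\vp}$ with $|\vp|<\be$, I have $\rRe(z^{\ga'})=r^{\ga'}\cos(\ga'\vp)\ge r^{\ga'}\cos(\ga'\be)=:\de_0\,r^{\ga'}$ with $\de_0>0$, because $|\ga'\vp|\le\ga'\be<\pi/2$ keeps the cosine bounded away from zero on the whole sector. Consequently $|\exp(-\ve z^{\ga'})|=\exp(-\ve\rRe(z^{\ga'}))\le\exp(-\ve\de_0 r^{\ga'})$, so the growth hypothesis yields
\[
|g_\ve(z)|\le C\exp(Cr^\ga-\ve\de_0 r^{\ga'}),\quad z=r\e^{\ri\vp}\in S.
\]
On the two bounding rays both $|F|\le1$ and $|\exp(-\ve z^{\ga'})|\le1$, hence $|g_\ve|\le1$ there; on the circular arc $|z|=R$ the right-hand side above tends to $0$ as $R\to\infty$ because $\ga'>\ga$, so $|g_\ve|\le1$ on that arc once $R$ is large. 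Applying the maximum modulus principle on the truncated sector $S_R:=S\cap\{|z|<R\}$ gives $|g_\ve|\le1$ on $S_R$, and letting $R\to\infty$ yields $|g_\ve|\le1$ on all of $S$. Unwinding the definition, $|F(z)|\le\exp(\ve\rRe(z^{\ga'}))$ for every fixed $z\in S$, and sending $\ve\to0^+$ produces $|F(z)|\le1$.

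The only genuinely delicate point is the choice of $\ga'$. The hypothesis $\ga<\pi/(\te_2-\te_1)$ is precisely what leaves room for an exponent $\ga'$ with $\ga<\ga'$ and $\ga'(\te_2-\te_1)/2<\pi/2$; this is what forces $\cos(\ga'\vp)$ to stay positive and bounded below throughout the sector, so that the subcritical growth $\exp(C|z|^\ga)$ is dominated by the decay of the multiplier $\exp(-\ve z^{\ga'})$ uniformly on the far arc. If the opening angle were too large relative to $\ga$, no such $\ga'$ would exist and the maximum-modulus step on the arc would fail; everything else in the argument is routine.
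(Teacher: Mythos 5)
Your proof is correct and complete: it is precisely the classical auxiliary-multiplier argument (normalize to a symmetric sector, pick $\ga<\ga'<\pi/(\te_2-\te_1)$ so that $\cos(\ga'\vp)\ge\de_0>0$ on the sector, bound $F(z)\,\e^{-\ve z^{\ga'}}$ by the maximum modulus principle on truncated sectors, then let $R\to\infty$ and $\ve\to0^+$), which is exactly the proof in Stein and Shakarchi to which the paper delegates this lemma without proving it. The only point worth flagging is your parenthetical claim that the opening is less than $2\pi$: this is not forced by the hypotheses as stated but is the standard implicit convention needed for $\arg z$ and the branch of $z^{\ga'}$ to be single-valued, and it is harmless here since the paper only applies the lemma to quadrants ($\te_2-\te_1=\f\pi2$).
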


Lemma \ref{lem-GL} can be proved by using the series theory
for analytic functions, e.g., Rudin \cite[Theorem 10.22]{R74}. The proof of
Lemma \ref{lem-PL} can be found in Stein and Shakarchi \cite{SS03}.

\section{Proof of Theorem \ref{thm-Cauchy}}\label{sec-proof}

Since $u(x_0,t) = (J^\al u)_x(x_0,t) = 0$ for $0<t<T$, we see
\begin{equation}\label{eq-3.1}
J^{m\al}u(x_0,t) = J^{m\al}u_x(x_0,t) = 0,
\quad 0<t<T,
\end{equation}
where $(m-1)\al>\f52$. By $0 < x_0 < 1$, we can choose
$\de \in (0,x_0)$ arbitrarily. Then it suffices to prove $u(x,t) = 0$ for
$\de <x<x_0$ and $0<t<T$.
Indeed, since $\de > 0$ can be arbitrarily small, we see that
$u(x,t) = 0$ for $0<x<x_0$ and $0<t<T$.
The proof for $x_0 < x < 1$ is similar.

To this end, we divide the proof into 5 steps.

\vspace{4pt}\noindent
{\bf Step 1. }Since it was assumed $(m-1)\al > \f52$,
it follows from the Sobolev embedding that
\begin{equation}\label{eq-3.2}
H^{(m-1)\al}(0,T) \subset C^2[0,T].
\end{equation}

We can prove
\begin{equation}\label{eq-3.3}
\pa_t^\al(J^{m\al}(u-a))=(J^{m\al}u)_{x x}- p\,J^{m\al}u.
\end{equation}
The above equation must be understood in the distribution sense.
The proof of \eqref{eq-3.3} is postponed to Appendix \ref{sec-app}.

Interpreting $a$ as a constant function in $t$, we
can verify $a\in L^2(0,T;L^2(0,1))$ directly.
Hence, since $J^{m\al}u, J^{m\al}a \in\HH(0,T;L^2(0,1))$ by
$u\in L^2(0,T;H^1(0,1)) \subset L^2(0,T;L^2(0,1))$, we see that
\[
\pa_t^\al ( J^{m\al}(u-a)) = \pa_t^\al(J^{m\al}u - J^{m\al}a)
= \pa_t^\al J^{m\al}u - \pa_t^\al J^{m\al}a
= \pa_t^\al J^{m\al}u - J^{(m-1)\al}a.
\]
Therefore, setting
\begin{equation}\label{eq-3.4}
v:= J^{m\al}u \in J^{m\al}L^2(0,T;H^1(0,1)),
\end{equation}
by \eqref{eq-3.3} we obtain
\begin{equation}\label{eq-3.5}
\begin{cases}
\pa_t^\al v = v_{x x} - p\, v + J^{(m-1)\al}a
& \mbox{in }(0,1)\times(0,T),\\
v \in\HH(0,T;L^2(0,1)).
\end{cases}
\end{equation}

In addition to $\HH(0,T)$, we need spaces $H_{\ell+\si}(0,T)$ with $\ell\in\BN$
and $\si\in(0,1)$, which is defined by
$$
H_{\ell+\si}(0,T):=\left\{w\in H^{\ell+\si}(0,T);\,
w(0)=\f{\rd w}{\rd t}(0)=\cdots=\f{\rd^{\ell-1}w}{\rd t^{\ell-1}}(0)=0\right\}.
$$
Then we can readily verify that $H_{\ell+\si}(0,T) = J^{\ell+\si}L^2(0,T)$
by $H_\si (0,T) = J^\si L^2(0,T)$ for $0<\si<1$.

By \eqref{eq-3.4}, we see
$$
v \in H_{m\al}(0,T;H^1(0,1)), \quad
\paa v \in H_{(m-1)\al}(0,T;H^1(0,1)).
$$
Hence, \eqref{eq-3.5} yields
\begin{equation}\label{eq-3.6}
v_{x x} - p(x)v = \paa v - J^{(m-1)\al}a
\in H_{(m-1)\al}(0,T;L^2(0,1)).
\end{equation}
With \eqref{eq-3.6}, noting $(\de,x_0) \subset (0,1)$ and
using $v \in H_{(m-1)\al}(0,T;H^1(0,1))$,
we apply the interior regularity for an elliptic operator
$\f{\rd^2}{\rd x^2} - p(x)$ (e.g., Gilbarg and Trudinger \cite[Theorem 8.8]{GT}), so that
\begin{equation}\label{eq-3.7}
v \in H_{(m-1)\al}(0,T;H^2(\de,x_0)).
\end{equation}
Applying the trace theorem to \eqref{eq-3.7}, by \eqref{eq-3.2} we obtain
$$
g:= v_x(\de,\cdot\,) \in H_{(m-1)\al}(0,T) \subset C^2[0,T].
$$
Consequently, we obtain $g \in H_{(m-1)\al}(0,T)
= \ov{\BCC[0,T]}^{H^{(m-1)\al}(0,T)} \subset C^2[0,T]$ and
$g(0) = 0$. Similarly, in terms of \eqref{eq-3.7}, we can see $v(\,\cdot\,, 0) = 0$ in $(\de,x_0)$.
Therefore, \eqref{eq-3.2} and $H^2(\de,x_0) \subset C^1[\de,x_0]$ yield
\[
\begin{cases}
g \in {\BCC}[0,T] \cap C^2[0,T], \quad v(\,\cdot\,, 0) = 0
\mbox{ in }(\de,x_0),\\
v \in C^2([0,T];H^2(\de,x_0)) \subset C^1([0,T];C^1[\de,x_0]).
\end{cases}
\]
Moreover, by \eqref{eq-3.1} and \eqref{eq-3.7}, we have
$v(x_0,t) = v_x(x_0,t) = 0$ for $0<t<T$.

Now, in place of $u$, we consider the solution $v$ to
\begin{equation}\label{eq-3.9}
\begin{cases}
\pa_t^\al v = v_{x x} - p\, v + J^{(m-1)\al}a
& \mbox{in }(\de,x_0)\times(0,T),\\
v_x(\de,t) = g(t) \in {\BCC}[0,T] \cap C^2[0,T], \ v_x(x_0,t) = 0,
& 0<t<T,\\
v \in C^1([0,T]; C^1[\de,x_0])
\end{cases}
\end{equation}
satisfying
\begin{equation}\label{3.9}
v(x_0,t) = 0, \quad 0<t<T.
\end{equation}

We construct the following extension $G\in C^2[0,+\infty)$ of
$g \in C^2[0,T]$. We can find $G_0 \in C^2[0,+\infty)$ such that
$G_0|_{(0,T)} = g$ and $\| G_0\|_{C^2[0,T+1]}
\le C\| g\|_{C^2[0,T]}$. Let $\chi \in C^\infty[0,+\infty)$ satisfy
$\chi(t) = \begin{cases}
1, & t \le T, \\
0, & t\ge T+1.
\end{cases}$
We set
$$
G(t) = \chi(t) G_0(t), \quad t>0.
$$
Then there holds
\begin{equation}\label{eq-3.11}
\begin{gathered}
G \in C^2[0,+\infty), \quad G|_{(0,T)} = g, \quad
G|_{(T+1,+\infty)} = 0, \\
\| G\|_{C^2[0,T+1]} \le C\| g\|_{C^2[0,T]}.
\end{gathered}
\end{equation}

Now we mainly consider an initial-boundary value problem:
\begin{equation}\label{eq-3.12}
\begin{cases}
\pa_t^\al V = V_{x x} - p\, V + J^{(m-1)\al}a, & \de < x < x_0,\ t>0,\\
V_x(\de,t) = G(t), \quad V_x(x_0,t) = 0, & t>0.
\end{cases}
\end{equation}
Then by \eqref{eq-3.9}, the uniqueness of solution to the initial-boundary
value problem yields $V(x,t) = v(x,t)$ for $\de<x<x_0$ and $0<t<T$.
Taking into consideration $v \in C([\de,x_0] \times [0,T])$, by \eqref{3.9}
we derive
\begin{equation}\label{eq-3.13}
V(x_0,t) = v(x_0,t) = 0, \quad 0<t<T.\medskip
\end{equation}

\noindent
{\bf Step 2.} In this step, we estimate $V$.
Henceforth, $\|\cdot\|$ and $(\,\cdot\,,\cdot\,)$ denote the norm
and the scalar product in $L^2(\de,x_0)$ respectively if not
specified otherwise.

Together with the existence of a solution $V$ to \eqref{eq-3.12}, we will estimate
$\|V_{x x}(\,\cdot\,,t)\|$ and $\| \pa_t^\al V(\,\cdot\,,t)\|$.
Let $\{ \la_n, \vp_n\}_{n\in \BN}$ be the eigensystem of
$A_\de v = -v_{x x} + p(x)v$ with the domain $\cD(A_\de)
= \{ \eta\in H^2(\de,x_0);\, \eta_x(\de) = \eta_x(x_0) = 0\}$.
Here we note that there exists some $n_0 \in \BN$ such that
$\la_1 < \cdots < \la_{n_0-1} < 0 \le \la_{n_0} < \la_{n_0+1}
< \cdots \to \infty$.
We define two operators with the domain $L^2(\de,x_0)$ and the range in
itself by
\begin{equation}\label{eq-def-SK}
\begin{aligned}
S(t)a & := \sum_{n=1}^\infty (a,\vp_n)E_{\al,1}(-\la_n t^\al)\vp_n(x),\\
K(t)a & := t^{\al-1}\sum_{n=1}^\infty E_{\al,\al}(-\la_n t^\al)
(a,\vp_n)\vp_n(x),
\end{aligned}\quad a \in L^2(\de,x_0),\ t>0.
\end{equation}
We have the following properties concerning $S(t)$ and $K(t)$.

\begin{lem}\label{lem-SK}
Let $S(t)$ and $K(t)$ be defined in \eqref{eq-def-SK}. Then for
$a\in L^2(\de,x_0)$ and $t>0,$ there hold
\begin{gather}
\|S(t)a\|\le C\,\e^{C t}\|a\|,\quad\lim_{t\to0+}\| S(t)a-a\|=0,\label{eq-3.15}\\
A_\de K(t)a=-S'(t)a.\label{eq-3.14}
\end{gather}
\end{lem}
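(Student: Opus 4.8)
The plan is to work entirely in terms of the spectral expansion, using the orthonormality of $\{\vp_n\}$ in $L^2(\de,x_0)$ together with the two standard bounds on the Mittag-Leffler functions: for $0<\al<1$ there is a constant $C>0$ with $|E_{\al,1}(-\eta)|\le C/(1+\eta)$ and $|E_{\al,\al}(-\eta)|\le C/(1+\eta)$ for all $\eta\ge0$, while on the positive axis $E_{\al,1}(\eta)\le C\,\e^{\eta^{1/\al}}$ as $\eta\to+\infty$. The essential structural fact I would exploit is that only the finitely many eigenvalues $\la_1,\dots,\la_{n_0-1}$ are negative; these finitely many terms are responsible for the exponential factor $\e^{Ct}$, whereas the infinitely many terms with $\la_n\ge0$ are controlled by a single decaying bound.

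For \eqref{eq-3.15}, Parseval's identity gives $\|S(t)a\|^2=\sum_{n=1}^\infty|(a,\vp_n)|^2\,|E_{\al,1}(-\la_n t^\al)|^2$, so it suffices to bound $|E_{\al,1}(-\la_n t^\al)|$ uniformly in $n$. For $n\ge n_0$ I would use $\la_n\ge0$ and $|E_{\al,1}(-\la_n t^\al)|\le C/(1+\la_n t^\al)\le C$; for the finitely many indices $n<n_0$ I would use $|E_{\al,1}(-\la_n t^\al)|=E_{\al,1}(|\la_n|t^\al)\le C\,\e^{|\la_n|^{1/\al}t}$. Taking $C_0:=\max_{1\le n<n_0}|\la_n|^{1/\al}$ gives $|E_{\al,1}(-\la_n t^\al)|\le C\,\e^{C_0t}$ for every $n$, hence $\|S(t)a\|\le C\,\e^{C_0t}\|a\|$, which is of the asserted form after enlarging $C$. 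For the limit I would write $\|S(t)a-a\|^2=\sum_{n=1}^\infty|(a,\vp_n)|^2\,|E_{\al,1}(-\la_n t^\al)-1|^2$; each summand tends to $0$ as $t\to0+$ since $E_{\al,1}$ is continuous with $E_{\al,1}(0)=1$, and for $t\in(0,1]$ each summand is dominated by $C\,|(a,\vp_n)|^2$ with $\sum_n|(a,\vp_n)|^2=\|a\|^2<\infty$. Dominated convergence for series then yields $\lim_{t\to0+}\|S(t)a-a\|=0$.

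For the identity \eqref{eq-3.14}, the key computation is the differentiation rule $\f{\rd}{\rd t}E_{\al,1}(-\la t^\al)=-\la\,t^{\al-1}E_{\al,\al}(-\la t^\al)$, which follows by differentiating the defining power series term by term and re-indexing using $\Ga(\al k+1)=\al k\,\Ga(\al k)$. Granting that I may differentiate $S(t)a$ term by term, this rule gives $S'(t)a=-t^{\al-1}\sum_{n=1}^\infty\la_n E_{\al,\al}(-\la_n t^\al)(a,\vp_n)\vp_n$, while applying the eigenrelation $A_\de\vp_n=\la_n\vp_n$ term by term to $K(t)a$ produces exactly the same series with the opposite sign, i.e.\ $A_\de K(t)a=-S'(t)a$.

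The main obstacle is the rigorous justification of these two term-by-term operations. I would fix $t>0$ and show $K(t)a\in\cD(A_\de)$ by verifying that $\sum_{n=1}^\infty\la_n^2\,|E_{\al,\al}(-\la_n t^\al)|^2\,|(a,\vp_n)|^2<\infty$ (the prefactor $t^{\al-1}$ being a fixed constant): for $\la_n\ge0$ the bound $|E_{\al,\al}(-\la_n t^\al)|\le C/(1+\la_n t^\al)$ forces $\la_n|E_{\al,\al}(-\la_n t^\al)|\le C\,t^{-\al}$ uniformly in $n$, so the $\la_n\ge0$ part is bounded by $C\,t^{-2\al}\|a\|^2$, while the finitely many negative eigenvalues contribute only a finite amount. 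Since $A_\de$ is self-adjoint and closed, this summability lets me pass $A_\de$ through the series. The same uniform-in-$n$ bound, now holding locally uniformly in $t$ on compact subsets of $(0,+\infty)$, shows that the differentiated series for $S(t)a$ converges uniformly there, which legitimizes the term-by-term differentiation and completes the proof.
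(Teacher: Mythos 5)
Your proof is correct and follows essentially the same route as the paper: Parseval's identity with the eigenbasis $\{\vp_n\}$, splitting the sum according to the sign of $\la_n$ and invoking the standard Mittag-Leffler bounds (Podlubny), dominated convergence for the limit $t\to0+$, and the derivative formula $\f{\rd}{\rd t}E_{\al,1}(-\la t^\al)=-\la t^{\al-1}E_{\al,\al}(-\la t^\al)$ combined with termwise differentiation for \eqref{eq-3.14}. The only difference is that you spell out the justification of the term-by-term operations (the summability check showing $K(t)a\in\cD(A_\de)$ and the locally uniform convergence of the differentiated series), which the paper simply asserts; this is a welcome, not a divergent, refinement.
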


Next, we set
\begin{align*}
W(x,t) & := V(x,t) + \f{(x-x_0)^2}{2(x_0-\de)}G(t),\\
F(x,t) & := \f{(x-x_0)^2}{2(x_0-\de)}(\pa_t^\al G(t) +p(x)G(t))
- \f1{x_0-\de}G(t) + J^{(m-1)\al}a\\
& =: f(x,t) + J^{(m-1)\al}a.
\end{align*}
Since $\f{(x-x_0)^2}{2(x_0-\de)}G(t) \in\HH(0,t_0;L^2(\de,x_0))$
with each $t_0>0$, we see that
$W\in\HH(\de,t_0;L^2(\de,x_0))$
if and only if $V\in\HH(0,t_0;L^2(\de,x_0))$.
Therefore, it is sufficient to consider an initial-boundary value problem
for $W$:
\begin{equation}\label{eq-3.16}
\begin{cases}
\pa_t^\al W = W_{x x} - p\, W + F(x,t), & \de < x < x_0,\ 0<t<t_0,\\
W_x(\de,t) = W_x(x_0,t) = 0, & 0<t<t_0,\\
W\in\HH(0,t_0;L^2(\de,x_0))\cap L^2(0,t_0;H^2(\de,x_0))
\end{cases}
\end{equation}
for arbitrary $t_0>0$.

\begin{lem}\label{lem-W}
Let $W$ satisfy \eqref{eq-3.16}. Then $W$ admits the representation
\begin{equation}\label{eq-3.17}
W(x,t) = \int^t_0 K(t-\tau)f(\tau)\,\rd\tau
+ \int^t_0 K(t-\tau)J^{(m-1)\al}a\,\rd\tau=: W_1(t) + W_2(t)
\end{equation}
for $t>0,$ where $K(t)$ was defined in \eqref{eq-def-SK}.
\end{lem}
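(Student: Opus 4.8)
The plan is to diagonalize the spatial part of \eqref{eq-3.16} against the Neumann eigenbasis $\{\vp_n\}_{n\in\BN}$ of $A_\de$, thereby reducing the problem to a countable family of scalar fractional ordinary differential equations, to solve each of these by the Laplace transform, and finally to reassemble the solution. The decisive structural point is that both $W(\,\cdot\,,t)$ and every $\vp_n$ satisfy the homogeneous Neumann conditions at $\de$ and $x_0$, so that the elliptic operator projects diagonally and no boundary terms appear.

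First I would set $w_n(t):=(W(\,\cdot\,,t),\vp_n)$ and $F_n(t):=(F(\,\cdot\,,t),\vp_n)$. Since $W\in\HH(0,t_0;L^2(\de,x_0))=J^\al L^2(0,t_0;L^2(\de,x_0))$, we may write $W=J^\al H$ with $H\in L^2(0,t_0;L^2(\de,x_0))$; because $J^\al$ acts only in the time variable it commutes with the spatial pairing $(\,\cdot\,,\vp_n)$, whence $w_n=J^\al(H,\vp_n)\in\HH(0,t_0)$ and $\paa w_n=(\paa W,\vp_n)$. Pairing the equation in \eqref{eq-3.16} with $\vp_n$ and integrating by parts twice --- the boundary contributions vanish by the Neumann conditions and $A_\de\vp_n=\la_n\vp_n$ --- gives $(W_{x x}-p\,W,\vp_n)=-\la_n w_n$, which is legitimate thanks to $W\in L^2(0,t_0;H^2(\de,x_0))$. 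Consequently each $w_n$ solves the scalar problem $\paa w_n+\la_n w_n=F_n$ in $(0,t_0)$ with $w_n\in\HH(0,t_0)$.

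Next I would solve this scalar equation. Applying the Laplace transform and using $\wh{\paa w_n}(s)=s^\al\wh{w_n}(s)$ --- valid exactly because the generalized initial value carried by $w_n\in\HH(0,t_0)$ vanishes --- the equation becomes $(s^\al+\la_n)\wh{w_n}(s)=\wh{F_n}(s)$, so that $\wh{w_n}(s)=\wh{F_n}(s)/(s^\al+\la_n)$. Since $1/(s^\al+\la_n)$ is the Laplace transform of the scalar kernel $k_n(t):=t^{\al-1}E_{\al,\al}(-\la_n t^\al)$, the convolution theorem yields the unique $\HH$-solution $w_n(t)=\int_0^t k_n(t-\tau)F_n(\tau)\,\rd\tau$. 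Summing $W=\sum_n w_n\vp_n$ and recognizing the operator $K(t-\tau)$ from \eqref{eq-def-SK} as $\sum_n k_n(t-\tau)(\,\cdot\,,\vp_n)\vp_n$ then produces $W(x,t)=\int_0^t K(t-\tau)F(\tau)\,\rd\tau$, and splitting $F=f+J^{(m-1)\al}a$ gives exactly $W_1+W_2$.

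The main obstacle will be to justify the interchange of the infinite sum with the time integral in the presence of the integrable singularity $(t-\tau)^{\al-1}$ of $k_n$. For the positive eigenvalues the complete monotonicity of $E_{\al,\al}(-\,\cdot\,)$ gives $0<E_{\al,\al}(-\la_n s^\al)\le 1/\Ga(\al)$, hence the uniform bound $|k_n(s)|\le C s^{\al-1}$; combined with Parseval in $n$ and Young's inequality in $t$ this controls the tail $\sum_{n\ge n_0}$, while the finitely many negative eigenvalues $\la_1,\dots,\la_{n_0-1}$ are absorbed using the exponential estimate \eqref{eq-3.15} of Lemma \ref{lem-SK}. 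Truncating the series and passing to the limit in $L^2(\de,x_0)$ then legitimizes the interchange; the boundedness of $K(t)$ together with the well-posedness theory (e.g.\ \cite{KRY21}) confirms that the constructed $W$ lies in the class prescribed in \eqref{eq-3.16}, so that the uniqueness of the initial-boundary value problem identifies it with the given solution.
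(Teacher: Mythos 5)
Your proposal is correct and follows essentially the same route as the paper's proof: an eigenfunction expansion against the Neumann eigenbasis of $A_\de$ yielding the Mittag-Leffler convolution kernel $t^{\al-1}E_{\al,\al}(-\la_n t^\al)$ mode by mode, followed by an $L^2(0,t_0;L^2(\de,x_0))$ convergence check via Young's convolution inequality with a uniform bound on $E_{\al,\al}$ for large $n$ and an exponential bound for the finitely many negative eigenvalues. The only difference is that you spell out the mode-wise reduction and scalar Laplace-transform solution that the paper delegates to the Fourier expansion argument of Sakamoto and Yamamoto \cite[Theorem 2.1]{SY11}, which is a welcome elaboration rather than a departure.
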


For consistency, we postpone the proofs of Lemmata \ref{lem-SK}
and \ref{lem-W} to Appendix \ref{sec-app}.

Henceforth, $C>0$ and $C_k>0$ denote generic constants independent
of $t$ but may depend on $g$ and $p$.

Then we have
$$
\f{\pa f}{\pa t}(x,t)
= \f{(x-x_0)^2}{2(x_0-\de)}\left(
\f\rd{\rd t}\paa G(t) + p(x)\f{\rd G}{\rd t}(t)\right)
- \f1{x_0-\de}\f{\rd G}{\rd t}(t),
$$
where
\begin{align*}
\f\rd{\rd t}\paa G(t) &
= \f1{\Ga(1-\al)}\f\rd{\rd t}\int^t_0 \f{G'(t-s)}{s^\al}\, \rd s\\
& = \f{G'(0)}{\Ga(1-\al)}t^{-\al}
+ \f1{\Ga(1-\al)}\int^t_0 \f{G''(t-s)}{s^\al}\, \rd s
\end{align*}
by $G \in C^2[0,+\infty)$.  Hence, in terms of \eqref{eq-3.11}, we deduce
\begin{equation}\label{3.18}
\begin{aligned}
\left|\f{\pa f}{\pa t}(x,t)\right| & \le\f{C|G'(0)|}{t^\al}
+C\int^t_0\f{|G''(t-s)|}{s^\al}\,\rd s+C|G'(t)|\\
& \le C\|G\|_{C^2[0,+\infty)}(1+t^{-\al}+t^{1-\al})\\
& \le C\|g\|_{C^2[0,T]}(1+t^{-\al}+t^{1-\al}),\quad\forall\,x\in(\de,x_0),\ \forall\,t>0.
\end{aligned}
\end{equation}

Hence, by \eqref{eq-3.15}, \eqref{eq-3.14} and integration by parts, we have
\begin{align*}
A_\de W_1(t) & = \int^t_0 A_\de K(t-\tau)f(\tau)\,\rd\tau
= \int^t_0 A_\de K(\tau) f(t-\tau)\,\rd\tau\\
& = - \int^t_0 S'(\tau)f(t-\tau)\,\rd\tau
=\Big[S(\tau)f(t-\tau)\Big]^{\tau=0}_{\tau=t} - \int^t_0 S(\tau)f'(t-\tau)\,
\rd\tau\\
& = f(t) - S(t)f(0) - \int^t_0 S(\tau)f'(t-\tau)\,\rd\tau.
\end{align*}
Using \eqref{3.18}, we obtain
\begin{align*}
\| A_\de W_1(t)\| & \le C\,\e^{C t}\| f\|_{C^1([0,T];L^2(\de,x_0))}\\
& \quad\,+ \int^t_0 C\,\e^{C\tau}\|g\|_{C^2[0,T]}(1+(t-\tau)^{-\al}
+ (t-\tau)^{1-\al})\,\rd\tau,
\end{align*}
indicating
\begin{equation}\label{eq-3.18}
\| A_\de W_1(t)\| \le C(1+t+t^{1-\al}+t^{2-\al})\,\e^{C t}
\le C\,\e^{C_1t}, \quad t>0.
\end{equation}
On the other hand, since
$J^{(m-1)\al}a = \f1{\Ga((m-1)\al+1)}t^{(m-1)\al}a$, we have
\begin{align*}
\| A_\de W_2(t)\| & = \left\| -\int^t_0 S'(t-\tau) J^{(m-1)\al}a \,\rd\tau\right\|\\
& = \left\| \int^t_0 S'(t-\tau)\f1{\Ga((m-1)\al+1)}s^{(m-1)\al}a \,\rd\tau \right\|.
\end{align*}
Then, by noting $(m-1)\al > 0$, the integration by parts yields
\begin{align*}
\int^t_0 S'(t-\tau)\tau^{(m-1)\al}a \,\rd\tau
& = \Big[S(t-\tau)\tau^{(m-1)\al}\Big]^{\tau=0}_{\tau=t}\,a\\
& \quad\,+ (m-1)\al \int^t_0 S(t-\tau)\tau^{(m-1)\al-1}a \,\rd\tau\\
& = -t^{(m-1)\al} + (m-1)\al \int^t_0 S(t-\tau)\tau^{(m-1)\al-1}a\,\rd\tau
\end{align*}
and so \eqref{eq-3.15} yields
\begin{align*}
\left\| \int^t_0 S'(t-\tau)\tau^{(m-1)\al}a \,\rd\tau\right\|
& \le C\,t^{(m-1)\al} + C\int^t_0 \tau^{(m-1)\al-1} \e^{C(t-\tau)}\,\rd\tau\\
& \le C\,t^{(m-1)\al} + C\,\e^{C t}\int_{\BR_+} \tau^{(m-1)\al-1}\e^{-C\tau}\,
\rd\tau\\
& \le C\,t^{(m-1)\al} + C\,\e^{C t}\f{\Ga((m-1)\al)}{C^{(m-1)\al}}
\le C_2\,\e^{C_2t}, \quad t>0.
\end{align*}
Consequently, \eqref{eq-3.17} and \eqref{eq-3.18} imply
$$
\| W_{x x}(\,\cdot\,,t)\| \le C\| A_\de W(\,\cdot\,,t)\|
\le C_2\,\e^{C_2t}, \quad t>0,
$$
that is,
$$
\| V_{x x}(\,\cdot\,,t)\| + \| V(\,\cdot\,,t)\|
\le C_2\,\e^{C_2t}, \quad t>0.
$$
Moreover, by the Sobolev embedding $H^2(\de,x_0) \subset C[\de,x_0]$,
we have
$\| V(\,\cdot\,,t)\|_{C[\de,x_0]}\le C\,\e^{C_2t}$ for $t>0$.
Using the first equation in \eqref{eq-3.12}, we can estimate $\pa_t^\al V$ as
$$
\| \pa_t^\al V(\,\cdot\,,t)\| \le C_2\,\e^{C_2t}, \quad t>0.
$$
Therefore,
$$
\| \pa_t^\al V(\,\cdot\,,t)\|_{L^1(\de,x_0)}
+ \| V_{x x}(\,\cdot\,,t)\|_{L^1(\de,x_0)}
+ \| V(\,\cdot\,,t)\|_{C[\de,x_0]} \le C_3\,\e^{C_3t}, \quad t>0.
$$
Hence, there exists some constant $s_0>0$ such that
for any $s>s_0$, we have
$$
\pa_t^\al V(x,t)\,\e^{-s t},\ V_{x x}(x,t)\,\e^{-s t},
\ V(x,t)\,\e^{-s t} \in L^1((\de,x_0)\times\BR_+)
$$
and
\begin{equation}\label{eq-3.19}
\| V(\,\cdot\,,t)\|_{C[\de,x_0]} \le C_3\,\e^{C_3t}, \quad t>0.
\end{equation}
Hence, Fubini's theorem yields that
$$
| \pa_t^\al V(x,t)\,\e^{-s t}|,\ | V_{x x}(x,t)\,\e^{-s t}|,
\ | V(x,t)\,\e^{-s t}| \quad \mbox{are integrable in }t\in\BR_+
$$
for arbitrarily chosen $s>s_0$ and almost all $x\in (\de,x_0)$. Thus
$$
\int^\infty_0 | \pa_t^\al V(x,t)| \,\e^{-s t}\,\rd t, \quad
\int^\infty_0 | V_{x x}(x,t)| \,\e^{-s t}\,\rd t, \quad
\int^\infty_0 | V(x,t)| \,\e^{-s t}\,\rd t
$$
exist for almost all $x\in (0,x_0)$ and $s>s_0$.  This is the same for
$V_x(\de,t)$ and $V_x(x_0,t)$.

Kubica, Ryzsewska and Yamamoto \cite[Theorem 2.7]{KRY21} implies
$\wh{\pa_t^\al V}(x,s) = s^\al\wh V(x,s)$ for $s>s_0$.
Therefore, we obtain
\begin{equation}\label{eq-3.20}
\begin{cases}
s^\al\wh V(x,s) = \wh V_{x x}(x,s) - p(x)\wh V(x,s)
+ s^{-(m-1)\al-1}a, & x\in (\de,x_0),\\
\wh V_x(\de,s) = \wh G(s), \quad \wh V_x(x_0,s) = 0
\end{cases}
\end{equation}
for $s>s_0$.

\vspace{4pt}\noindent
{\bf Step 3.} Recalling the function $\vp(x,z)$ defined by \eqref{eq-y}, we set
\begin{equation}\label{eq-3.21}
F(z):= \int^{x_0}_\de  a(x)\vp(x,z)\,\rd x, \quad z\in \BC.
\end{equation}
We notice that $F(z)$ is an entire function on $\BC$ since $\vp$ is anlytic with respect to $z\in\BC$.

\begin{lem}
The function $F(z)$ defined in \eqref{eq-3.21} satisfies
\begin{equation}\label{eq-3.22}
s^{-(m-1)\al-1}F(z)
= (s^\al+z^2)\int^{x_0}_\de \wh V(x,s)\vp(x,z)\,\rd x
+ \wh G(s) + \wh V(x_0,s)\vp'(x_0,z)
\end{equation}
for $s>s_0$ and $z\in \BC$.
\end{lem}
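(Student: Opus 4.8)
The plan is to derive \eqref{eq-3.22} by testing the Laplace-transformed equation \eqref{eq-3.20} for $\wh V(\,\cdot\,,s)$ against the solution $\vp(\,\cdot\,,z)$ of the ordinary differential equation \eqref{eq-y}, and integrating over $(\de,x_0)$. First I would rewrite the first line of \eqref{eq-3.20} as $\wh V_{x x}=(s^\al+p)\wh V-s^{-(m-1)\al-1}a$, multiply both sides by $\vp(x,z)$, and integrate in $x$ from $\de$ to $x_0$. Since the bound $\|V_{x x}(\,\cdot\,,t)\|\le C_2\e^{C_2t}$ obtained in Step 2 guarantees $\wh V(\,\cdot\,,s)\in H^2(\de,x_0)$, and $\vp(\,\cdot\,,z)$ is smooth as the solution of an ODE, every integral and trace below is well defined; moreover, by the definition \eqref{eq-3.21}, the source term on the right immediately contributes the quantity $s^{-(m-1)\al-1}F(z)$.

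The core computation is to integrate the term $\int^{x_0}_\de\wh V_{x x}\vp\,\rd x$ by parts twice, so as to transfer both $x$-derivatives onto $\vp$:
\[
\int^{x_0}_\de\wh V_{x x}\vp\,\rd x
=\Big[\wh V_x\vp-\wh V\vp'\Big]_{x=\de}^{x=x_0}
+\int^{x_0}_\de\wh V\vp''\,\rd x.
\]
Then I would invoke the governing equation of \eqref{eq-y}, namely $\vp''=(p-z^2)\vp$, to eliminate $\vp''$ in the last integral. The decisive cancellation occurs here: the resulting $\int^{x_0}_\de p\,\wh V\vp\,\rd x$ cancels exactly against the potential contribution coming from the right-hand side $(s^\al+p)\wh V$, while the surviving terms $s^\al\int^{x_0}_\de\wh V\vp\,\rd x$ and $z^2\int^{x_0}_\de\wh V\vp\,\rd x$ combine into the factor $(s^\al+z^2)\int^{x_0}_\de\wh V\vp\,\rd x$ that appears in \eqref{eq-3.22}.

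It then remains to evaluate the boundary terms using the Neumann data in \eqref{eq-3.20} and the initial conditions in \eqref{eq-y}. Since $\vp(\de,z)=1$ and $\vp'(\de,z)=0$, together with $\wh V_x(\de,s)=\wh G(s)$ and $\wh V_x(x_0,s)=0$, the boundary terms collapse to
\[
\Big[\wh V_x\vp-\wh V\vp'\Big]_{x=\de}^{x=x_0}
=-\wh G(s)-\wh V(x_0,s)\vp'(x_0,z),
\]
which enters the identity for $s^{-(m-1)\al-1}F(z)$ with a minus sign, thereby reproducing the two terms $\wh G(s)+\wh V(x_0,s)\vp'(x_0,z)$ of \eqref{eq-3.22}. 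The algebra is elementary, so the only point demanding genuine care — the \emph{main obstacle} in a formal sense — is the justification that the Laplace transform commutes with the $x$-differentiations and with the traces at $x=\de,x_0$; this is precisely what the integrability statements and the regularity $\wh V(\,\cdot\,,s)\in H^2(\de,x_0)$ established at the end of Step 2 provide, so that the double integration by parts above is fully legitimate, and the analyticity of $\vp(x,z)$ in $z$ ensures the entirety of $F$ already noted after \eqref{eq-3.21}.
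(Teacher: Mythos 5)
Your proposal is correct and follows essentially the same route as the paper: both read \eqref{eq-3.20} as $a = s^{(m-1)\al+1}\bigl(s^\al\wh V - \wh V_{x x} + p\,\wh V\bigr)$ tested against $\vp(\,\cdot\,,z)$ over $(\de,x_0)$, integrate $\int_\de^{x_0}\wh V_{x x}\vp\,\rd x$ by parts twice, eliminate $\vp''$ via $\vp''=(p-z^2)\vp$ so that the potential terms cancel, and evaluate the boundary bracket using $\wh V_x(\de,s)=\wh G(s)$, $\wh V_x(x_0,s)=0$, $\vp(\de,z)=1$, $\vp'(\de,z)=0$. Your sign bookkeeping reproduces \eqref{eq-3.22} exactly, and the regularity points you flag are precisely those the paper settles at the end of Step 2 before stating \eqref{eq-3.20}.
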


\begin{proof}
By \eqref{eq-3.20}, we have
$$
F(z)= s^{(m-1)\al+1}\int^{x_0}_\de \left(s^\al\wh V(x,s)
- \wh V_{x x}(x,s) + p(x)\wh V(x,s)\right) \vp(x,z)\,\rd x.
$$
Then we apply the integration by parts to obtain
\begin{align*}
\int^{x_0}_\de  \wh V_{x x}(x,s)\vp(x,z)\,\rd x
& =\left[\wh V_{x}(x,s)\vp(x,z)\right]^{x=x_0}_{x=\de}
- \left[\wh V(x,s)\vp'(x,z)\right]^{x=x_0}_{x=\de}\\
& \quad\,+ \int^{x_0}_\de \wh V(x,s)\vp_{x x}(x,z)\,\rd x\\
& = -\wh G(s) - \wh V(x_0,s)\vp'(x_0,z)
+ \int^{x_0}_\de  p(x) \vp(x,z) \wh V(x,s)\,\rd x\\
& \quad\,- z^2\int^{x_0}_\de  \wh V(x,s)\vp(x,z)\,\rd x.
\end{align*}
Hence, we obtain
$$
F(z) = s^{(m-1)\al+1}\left(
\int^{x_0}_\de  (s^\al+z^2)\wh V(x,s)\vp(x,z)\,\rd x
+ \wh G(s) + \wh V(x_0,s)\vp'(x_0,z) \right).
$$
Thus the proof of Lemma 3.3 is complete.
\end{proof}

The choice $z=\pm \ri\,s^{\f\al2}$ yields $s^\al+z^2=0$ in \eqref{eq-3.22}
and thus
\begin{equation}\label{eq-3.23}
s^{-(m-1)\al-1}F(z)=\vp'(x_0,z)\wh V(x_0,s)
+ \wh G(s), \quad z=\pm \ri\,s^{\f\al2},\ s>s_0.\medskip
\end{equation}

\noindent
{\bf Step 4.} Based on \eqref{eq-3.23}, we can derive an estimate for
$F(z)$ defined in \eqref{eq-3.21} as follows.

\begin{lem}\label{lem-est-F}
There exists a sufficiently large integer $N$ such that
\[
|F(z)|\le C|z|^N,\quad z=\pm\ri\,s^{\f\al2},\ s > s_0,
\]
where the constant $C>0$ is independent of all $s>s_0$ and $N$.
\end{lem}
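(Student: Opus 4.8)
The plan is to read off everything from the identity \eqref{eq-3.23}, which I would rewrite as
\[
F(z) = s^{(m-1)\al+1}\Bigl(\vp'(x_0,z)\,\wh V(x_0,s) + \wh G(s)\Bigr),
\qquad z = \pm\ri\,s^{\f\al2},\ s>s_0.
\]
Since $z=\pm\ri\,s^{\f\al2}$ means $|z|=s^{\f\al2}$, i.e.\ $s=|z|^{2/\al}$, the prefactor equals $s^{(m-1)\al+1}=|z|^{2(m-1)+2/\al}$, which is already a fixed power of $|z|$. So it suffices to bound the bracketed factor uniformly in $s>s_0$ and then take any integer $N\ge 2(m-1)+2/\al$.

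I would estimate the three ingredients separately. Since $z$ lies on the imaginary axis ($\arg z=\pm\f\pi2$) with $|z|=s^{\f\al2}\to\infty$, estimate \eqref{esti-phi'} of Lemma \ref{lem-asymp} gives $|\vp'(x_0,z)|\le C|z|\,\e^{|z|}=C\,s^{\f\al2}\e^{s^{\al/2}}$ for large $s$. The crucial input for the second factor is that $V(x_0,\,\cdot\,)=0$ on $(0,T)$ by \eqref{eq-3.13}, so the Laplace integral only sees $t\ge T$; together with the growth bound $\|V(\,\cdot\,,t)\|_{C[\de,x_0]}\le C_3\e^{C_3t}$ from \eqref{eq-3.19}, this yields
\[
|\wh V(x_0,s)| = \left|\int_T^\infty V(x_0,t)\,\e^{-st}\,\rd t\right|
\le C_3\int_T^\infty \e^{-(s-C_3)t}\,\rd t \le C\,\e^{-sT}
\]
once $s_0>C_3$. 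For the third factor I would integrate by parts twice in $\wh G(s)=\int_0^{T+1}G(t)\e^{-st}\,\rd t$, using $G\in C^2[0,+\infty)$ and $G(0)=0$, $G(T+1)=G'(T+1)=0$ from \eqref{eq-3.11}, to get $|\wh G(s)|\le C/s^2$.

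The decisive step is combining the first two bounds. The factor $\vp'(x_0,z)$ grows like $\e^{|z|}=\e^{s^{\al/2}}$, which at first sight threatens any polynomial estimate; but $\wh V(x_0,s)$ carries the far stronger decay $\e^{-sT}$. Because $0<\al<1$ forces $\f\al2<1$, we have $s^{\al/2}=o(s)$, so $s^{\al/2}-sT\to-\infty$, and hence $|\vp'(x_0,z)\,\wh V(x_0,s)|\le C\,s^{\f\al2}\e^{s^{\al/2}-sT}\to 0$ as $s\to\infty$; in particular it stays bounded for $s>s_0$. Combined with $|\wh G(s)|\le C$, the bracketed factor is uniformly bounded, so $|F(z)|\le C\,s^{(m-1)\al+1}=C|z|^N$ with the constant $C$ independent of $s$.

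The main obstacle is exactly this tension between the exponential growth $\e^{|z|}$ and the polynomial target: the argument hinges on recognizing that the vanishing of the lateral data $V(x_0,\,\cdot\,)$ on $(0,T)$ produces an exponential decay $\e^{-sT}$ in the Laplace variable that overwhelms $\e^{s^{\al/2}}$ \emph{precisely} because $\f\al2<1$. Verifying this domination and tracking the power of $s$ emitted by the prefactor via $s=|z|^{2/\al}$ is where the real content lies; the individual Laplace-transform estimates are routine integrations by parts.
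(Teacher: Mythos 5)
Your proof is correct and takes essentially the same route as the paper's: both start from \eqref{eq-3.23}, combine $|\vp'(x_0,z)|\le C|z|\,\e^{|z|}$ from \eqref{esti-phi'} with the decay $|\wh V(x_0,s)|\le C\,\e^{-sT}$ produced by the vanishing \eqref{eq-3.13} and the growth bound \eqref{eq-3.19}, observe that $\e^{s^{\al/2}}\e^{-sT}$ stays bounded because $\al<1$, and convert via $s=|z|^{2/\al}$. The only immaterial deviations are that the paper bounds $\wh G(s)$ simply by $\|G\|_{L^1(0,T+1)}$ rather than by your double integration by parts, and it keeps the extra factor $|z|$ in the polynomial part, reaching the exponent $2m-1+\f2\al$ instead of your slightly smaller $2(m-1)+\f2\al$.
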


\begin{proof}
For $z=\pm\ri\,s^{\f\al2}$ for $s>s_0$, we have
$|\vp'(x_0,z)|\le C|z|\,\e^{|z|}$
by \eqref{esti-phi'} in Lemma \ref{lem-asymp}. Substituting this into
\eqref{eq-3.23} implies
$$
|F(z)|\le C\, s^{(m-1)\al+1}|z|\,\e^{|z|}\left|\int_{\BR_+}V(x_0,t)\,\e^{-s t}
\,\rd t\right|
+ s^{(m-1)\al+1}\left|\int_{\BR_+}G(t)\,\e^{-s t}\,\rd t\right|
$$
for $z=\pm \ri\,s^{\f\al2}$.
From \eqref{eq-3.13} and \eqref{eq-3.19}, it follows that
\begin{align*}
\left| \int_{\BR_+} V(x_0,t)\, \e^{-st} dt\right|
& \le \int_T^{+\infty}|V(x_0,t)|\,\e^{-s t}\,\rd t
\le \int_T^{+\infty}C_3\,\e^{(C_3-s)t}\,\rd t\\
& =\f{C_3\,\e^{(C_3-s)T}}{s-C_3},\quad s>C_3.
\end{align*}
On the other hand, by \eqref{eq-3.11}, we see that
\[
\left|\int_{\BR_+}G(t)\,\e^{-s t}\,\rd t\right|
\le \int_0^{T+1}|G(t)|\,\e^{-s t}\,\rd t
\le\|G\|_{L^1(0,T+1)}, \quad s>0.
\]
Collecting the above estimates, we reach
\[
|F(z)|\le C_4 s^{(m-\f12)\al+1}
\f{\exp(s^{\f\al2})\e^{-(s-C_5)T}}{s-C_5}
+ C_4s^{(m-1)\al+1},\quad s>C_5,
\]
where $C_5>0$ is some constant.

Since $\al\in(0,1)$, we can dominate
$|F(z)|\le C_6 s^{(m-\f12)\al+1}$ for $s > C_5$.
By the relation $z=\pm\ri\,s^{\f\al2}$, we can further conclude
\begin{equation}\label{eq-3.25}
|F(z)|\le C_7|z|^{2m-1+\f2\al}\le C_8|z|^N,
\quad z=\pm\ri\,s^{\f\al2},\ s>C_5
\end{equation}
with an integer $N\ge 2m-1+\f2\al$, which is the desired estimate.
\end{proof}

Next, we show two further properties concerning $F(z)$ below.

\begin{lem}\label{lem-poly}
The function $F(z)$ defined in \eqref{eq-3.21} is a polynomial of order $N$ at most$,$
where the integer $N$ is given in \eqref{eq-3.25}.
\end{lem}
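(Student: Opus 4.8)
The plan is to combine the polynomial bound on the imaginary axis furnished by Lemma \ref{lem-est-F} with a crude exponential-type bound valid on all of $\BC$, feed these into the Phragm\'en--Lindel\"of principle (Lemma \ref{lem-PL}) sector by sector to promote the polynomial bound to the entire plane, and finally invoke the generalized Liouville theorem (Lemma \ref{lem-GL}).

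First I would record two global facts about $F$. From the crude asymptotics \eqref{esti-asymp0}, $|\vp(x,z)|\le C\e^{|\rIm\,z|x}\le C\e^{|\rIm\,z|x_0}$ uniformly for $x\in(\de,x_0)$, so that, by \eqref{eq-3.21},
\[
|F(z)|\le\int_\de^{x_0}|a(x)|\,|\vp(x,z)|\,\rd x\le C\e^{|\rIm\,z|x_0}\le C\e^{x_0|z|},\quad z\in\BC,
\]
i.e.\ $F$ is entire of exponential type. Restricting to the real axis, where $\rIm\,z=0$, gives the uniform bound $|F(z)|\le C$ for $z\in\BR$. Combined with Lemma \ref{lem-est-F}, which yields $|F(z)|\le C|z|^N$ along the rays $z=\pm\ri\,s^{\f\al2}$ (these sweep out the two imaginary semi-axes for $|z|>s_0^{\f\al2}$), I then have $F$ bounded by a constant on $\BR$ and by $C|z|^N$ on $\ri\BR$.

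Next I would run Phragm\'en--Lindel\"of on each of the four open quadrants, say on $S_1:=\{0<\arg z<\f\pi2\}$ first. Choosing a point $z_0\notin\ov{S_1}$ (e.g.\ $z_0=-1$), the function $\Phi(z):=F(z)/(z-z_0)^N$ is holomorphic on $\ov{S_1}$, and on the two bounding semi-axes one checks the elementary inequality $|z-z_0|\ge\max(1,|z|)$; hence the two different boundary bounds ($O(1)$ on $\BR$ and $O(|z|^N)$ on $\ri\BR$) both reduce to $|\Phi|\le C_*$ on $\pa S_1$, so $|\Phi/C_*|\le1$ there. Since $|\Phi(z)|\le C\e^{x_0|z|}$ persists, the growth exponent is $\ga=1$, which satisfies $\ga<\pi/(\pi/2)=2$, the hypothesis of Lemma \ref{lem-PL} on a sector of opening $\f\pi2$. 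Therefore $|\Phi|\le C_*$ throughout $S_1$, i.e.\ $|F(z)|\le C_*|z-z_0|^N\le C|z|^N$ for large $|z|$ in $S_1$. The identical argument in the remaining three quadrants (taking $z_0$ outside each) gives $|F(z)|\le C|z|^N$ for all sufficiently large $|z|$ in $\BC$, and then Lemma \ref{lem-GL} concludes that $F$ is a polynomial of degree at most $N$.

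The main obstacle I anticipate is the bookkeeping in the Phragm\'en--Lindel\"of step. One must split $\BC$ into sectors of opening strictly less than $\pi$ so that the exponential type $\ga=1$ stays below the critical exponent dictated by the sector angle; a single half-plane of opening $\pi$ would be borderline ($\ga=1$ versus the bound $\pi/\pi=1$) and would fail, which is exactly why I pass to quadrants of opening $\f\pi2$. One must also choose the normalizing factor $(z-z_0)^N$ zero-free on the closed sector so that both boundary bounds collapse simultaneously to the constant $1$ required by Lemma \ref{lem-PL}, and verify that the polynomial bound of Lemma \ref{lem-est-F} indeed covers both imaginary semi-axes for large modulus, the bounded low-modulus part being controlled by the entirety of $F$.
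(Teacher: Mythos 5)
Your proposal is correct and follows essentially the same route as the paper's proof: both normalize $F$ by the $N$th power of a linear factor zero-free on the relevant closed region (the paper uses $(z+1)^N$ for $\rRe\,z\ge0$ and $(z-1)^N$ for $\rRe\,z\le0$), apply the Phragm\'en--Lindel\"of principle (Lemma \ref{lem-PL}) on quadrants of opening $\f\pi2$ with exponential-type growth $\ga=1<2$, and conclude via the generalized Liouville theorem (Lemma \ref{lem-GL}). Your bookkeeping of the boundary bounds --- a constant on $\BR$, the bound $C|z|^N$ on the large-modulus part of the imaginary axis from Lemma \ref{lem-est-F}, and the crude entire-function bound together with $|z\pm1|\ge1$ for the small-modulus part --- matches the paper's treatment of the case $|z|\le M$ exactly.
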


\begin{proof}
Introducing $F_N(z):=\f{F(z)}{(z+1)^N}$, we see that $F_N(z)$ is holomorphic
for $\rRe\,z>-1$.
Furthermore, \eqref{eq-3.25} implies that there exists a sufficiently large
constant $M>0$ such that
\[
|F_N(z)|\le\f{C_8|z|^N}{|z+1|^N}\le C_9,\quad \arg z=\pm \f\pi2,\ |z|\ge M.
\]
Here and henceforth the constant $C_k$ can depend also on the
constant $M>0$. Meanwhile, for $\arg z=\pm\f\pi2$, we have
$|z+1|\ge1$ and  the continuity of the function $\varphi(x,z)$ yields
$$
|F_N(z)| = \f1{| z+1|^N}\left| \int^{x_0}_\de
a(x)\vp(x,z) \rd x \right|
\le \int_\de^{x_0} |a(x)\vp(x,z)|\,\rd x\le C_{10}
$$
if $| z| \le M$ and $\arg z = \pm \f\pi2$.

Combining the above estimates for $F_N$ yields
\[
|F_N(z)|\le C_{11} \quad \mbox{for all
$z\in \BC$ satisfying }\arg z=\pm\f\pi2.
\]

On the other hand, it follows from the asymptotic estimate \eqref{esti-asymp0}
of $\vp(x,z)$ in Lemma \ref{lem-asymp} that $|\vp(x,z)|\le C_{12}$ for any
$z\in\BR$. Hence, we obtain
\[
|F(z)|\le C_{13}\int_\de^{x_0} |a(x)|\,\rd x\le C,\quad z\in\BR
\]
and finally $|F_N(z)|\le C$ for $z \in \BR$.
Again by \eqref{esti-asymp0}, we see
\begin{align*}
| F_N(z)| \le C_{14}| F(z)|
& \le C_{14}\left| \int^{x_0}_\de  a(x)\vp(x,z) \,\rd x\right| \\
& \le C_{15}\int^{x_0}_\de  \e^{|\rIm\,z| x} | a(x)| \,
\rd x \le C_{15}\,\e^{|z| x_0}
\end{align*}
for $\rRe\,z>M$. In the case of $0<\rRe\,z\le M$, we can conclude from the continuity of the function $\vp(x,z)$ that
$$
|F_N(z)|\le C_{14}\|a\|_{L^\infty(0,1)}\|\vp\|_{L^\infty((0,1)\times\{|z|\le M\}}.
$$
Combining the above estimates, we finally get
$$
|F_N(z)|\le C_{15}'\,\e^{|z|x_0},\quad\rRe z>0.
$$

Choosing $(\te_1,\te_2) = (0,\f\pi2)$ and $(\te_1,\te_2) = (-\f\pi2,0)$, we apply Lemma \ref{lem-PL} to obtain $|F_N(z)|\le C$ for all $\rRe\,z>0$. Hence,
\[
|F(z)|\le C|z+1|^N ,\quad\rRe\,z\ge0.
\]
Similarly, by considering $\f{F(z)}{(z-1)^N}$, we can derive $|F(z)|
\le C|z-1|^N$
for $\rRe\,z<0$. Consequently, according to Lemma \ref{lem-GL}, $F$ must be a
polynomial satisfying $\deg F\le N$.
\end{proof}

\begin{lem}\label{lem-vanish}
The function $F(z)$ defined in \eqref{eq-3.21} vanishes identically in $\BC$.
\end{lem}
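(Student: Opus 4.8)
The plan is to leverage the polynomial structure already obtained in Lemma \ref{lem-poly} together with the sharp large-argument asymptotics of $\vp$ and the Riemann--Lebesgue lemma. Lemma \ref{lem-poly} tells us that $F$ is a polynomial of degree at most $N$; moreover, in the course of its proof it was shown that $|F(z)|\le C$ for all real $z$, this bound resulting from $|\vp(x,z)|\le C_{12}$ (valid for $z\in\BR$, where $\rIm\,z=0$) combined with $a\in L^1(\de,x_0)$. A polynomial that remains bounded on the whole real line can carry no term of positive degree, so $F$ necessarily reduces to a constant.

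It then remains to identify this constant as $0$, and the idea is to let the real argument tend to $+\infty$. Here I would invoke the finer asymptotic \eqref{esti-asymp}: for real $z=\xi$ one has $\rIm\,\xi=0$, hence $\vp(x,\xi)=\cos(\xi(x-\de))+O(1/|\xi|)$ with the error uniform in $x\in(\de,x_0)$. Substituting this into the definition \eqref{eq-3.21} yields $F(\xi)=\int_\de^{x_0}a(x)\cos(\xi(x-\de))\,\rd x+R(\xi)$, where $|R(\xi)|\le (C/|\xi|)\,\|a\|_{L^1(\de,x_0)}\to0$ as $\xi\to+\infty$.

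Since $a\in L^2(\de,x_0)\subset L^1(\de,x_0)$, the Riemann--Lebesgue lemma forces the cosine integral to vanish in the limit $\xi\to+\infty$, whence $F(\xi)\to0$. Combining this with the fact that $F$ is constant, the constant must be $0$, i.e.\ $F\equiv0$ on $\BC$. (Equivalently, one may skip the boundedness step and argue at once that a polynomial whose values tend to $0$ along a real ray is identically zero.)

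The point requiring the most care is the passage from boundedness to the exact value of the constant. The crude estimate $|\vp|\le C$ on $\BR$ gives boundedness, and hence constancy, essentially for free, but it contains no decay and therefore cannot by itself pin down the constant; the decay must instead be extracted from the oscillatory leading term $\cos(\xi(x-\de))$ in \eqref{esti-asymp}. Thus the main technical obstacle is ensuring that the remainder in that asymptotic is genuinely uniform in $x\in(\de,x_0)$, so that it may be integrated against $a$ and bounded by $(C/|\xi|)\|a\|_{L^1}$. Once this uniformity is secured, Riemann--Lebesgue closes the argument and establishes $F\equiv0$.
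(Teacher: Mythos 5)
Your proposal is correct and follows essentially the same route as the paper: both rest on Lemma \ref{lem-poly}, the refined asymptotic \eqref{esti-asymp} along the real axis, and the Riemann--Lebesgue lemma to force $F(\xi)\to0$ as $\xi\to+\infty$ in $\BR$. The only cosmetic difference is that you first use boundedness on $\BR$ to reduce the polynomial to a constant before killing it, whereas the paper strips the coefficients $a_N,a_{N-1},\dots,a_0$ directly from the same limit --- the variant you yourself note parenthetically --- and the uniformity in $x\in(\de,x_0)$ of the $O(1/|z|)$ remainder that you flag is exactly what the paper (via \cite{lev}) uses implicitly.
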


\begin{proof}
Owing to Lemma \ref{lem-poly}, we can assume that
$F(z)=\sum^N_{j=0}a_jz^j$ with some $a_j \in \BC$.
For $z>0$, let us consider $\lim_{z\to+\infty}F(z)$. From the asymptotic
behavior \eqref{esti-asymp} of $\vp(x,z)$ in Lemma \ref{lem-asymp}, we obtain
\begin{align*}
\lim_{z\to+\infty}F(z) & =\lim_{z\to+\infty}\left(\int_\de^{x_0}
a(x)\cos(z(x-\de))\,\rd x
+ \int_\de^{x_0} a(x)O(|z|^{-1})\,\rd x\right)\\
& =\lim_{z\to+\infty}\int_\de^{x_0} a(x)\cos(z (x-\de))\,\rd x.
\end{align*}
In view of the Riemann-Lebesgue lemma, we have
\[
\lim_{z\to+\infty}\int_\de^{x_0} a(x)\cos(z (x-\de))\,\rd x=0,
\]
that is,
\[
\lim_{z\to+\infty}F(z)=\lim_{z\to+\infty}\sum^N_{j=0}a_j z^j=0.
\]
If $a_N\ne 0$, then
\[
\lim_{z\to+\infty}F(z)=\lim_{z\to+\infty}a_n z^N
\left(1+\f{a_{N-1}}{a_N}\f1z+\cdots+\f{a_0}{a_N}\f1{z^N}\right)=\infty,
\]
which is impossible. Therefore, we conclude $a_N=0$. Repeating the same argument,
we see $a_j=0$ for $j=0,1,\ldots,N$, which completes the proof.
\end{proof}

\noindent
{\bf Step 5.} Now we are ready to finish the proof of Theorem \ref{thm-Cauchy}.
By \eqref{eq-3.21} and Lemma \ref{lem-vanish}, we see that
\begin{equation}\label{eq-3.28}
\int_\de^{x_0} a(x)\vp(x,z)\,\rd x=0,\quad\forall\,z\in\BC.
\end{equation}
In order to show $a \equiv0$ in $(\de,x_0)$, we invoke the
Neumann eigensystem
$\{(\mu_n,\psi_n)\}$ of the operator $-\f{\rd^2}{\rd x^2}+p(x)+p_0$ in $(\de,x_0)$, that is,
$$
\begin{cases}
-\psi_n''+ (p(x)+p_0)\psi_n = \mu_n\psi_n & \mbox{in }(\de,x_0),\\
\psi_n'(\de)=\psi_n'(x_0)=0.
\end{cases}
$$
Here $p_0>0$ is a constant sufficiently large so that $\mu_n>0$ for $n\in \BN$.
Normalizing $\psi_n$ by $\psi_n(\de)=1$, we immediately see that
$\vp(\,\cdot\,, \sqrt{\mu_n-p_0}\,) = \psi_n$. Since $\{\psi_n\}$ forms a complete
orthogonal basis in $L^2(\de,x_0)$, it suffices to take
$z=\sqrt{\mu_n-p_0}$ in \eqref{eq-3.28} to conclude $a\equiv0$ in $(\de,x_0)$.

It remains to show $G\equiv0$. Now the equation \eqref{eq-3.23} becomes
\[
\int_{\BR_+}G(t)\,\e^{-s t}\,\rd t
+ \vp'(x_0,z)\int_{\BR_+}V(x_0,t)\,\e^{-s t}\,\rd t=0,
\quad z=\pm\ri\,s^{\f\al2}.
\]
Using \eqref{eq-3.11}, \eqref{eq-3.13}, \eqref{eq-3.23} and
Lemma \ref{lem-vanish}, for $z= \pm \ri\, s^{\f\al2}$
we deduce
\begin{align*}
\int^{T+1}_0 G(t)\,\e^{-s t} \,\rd t
& = \int_{\BR_+} G(t)\,\e^{-s t} \,\rd t
= -\vp'(x_0,z)\int_{\BR_+} V(x_0,t)\,\e^{-s t} \,\rd t\\
& = -\vp'(x_0,z)\int^{+\infty}_T V(x_0,t)\,\e^{-s t} \,\rd t.
\end{align*}
Therefore, \eqref{esti-phi'} in Lemma \ref{lem-asymp} implies
\begin{align*}
\left|\int^T_0 g(t)\,\e^{-s t}\,\rd t\right|
& =\left|\int^T_0G(t)\,\e^{-s t}\,\rd t\right|
=\left|\int^{T+1}_0G(t)\,\e^{-s t}\,\rd t
-\int^{T+1}_T G(t)\,\e^{-s t}\,\rd t\right|\\
& \le\left|\int^{T+1}_0G(t)\,\e^{-s t}\,\rd t\right|
+\int^{T+1}_T|G(t)|\,\e^{-s t}\,\rd t\\
& \le|\vp'(x_0,z)|\left|\int_T^{+\infty}V(x_0,t)\,\e^{-s t}\,\rd t\right|
+\|G\|_{C[0,T+1]}\int^{T+1}_T\e^{-s t}\,\rd t\\
& \le\|G\|_{C[0,T+1]}\,\e^{-s T}\f{1-\e^{-s}}s\\
& \quad\,+C s^{\f\al2}\exp(C s^{\f\al2})\left|\int_T^\infty V(x_0,t)\,
\e^{-s t}\,\rd t\right|,\quad s>\eta_0.
\end{align*}
Moreover, by \eqref{eq-3.19}, we obtain
\[
\left|\int_0^T g(t)\,\e^{-s t}\,\rd t\right|\le C s^{-1}\e^{-s T}
+C s^{\f\al2}\exp(C s^{\f\al2})\f{\e^{-(s-C_3')T}}{s-C_3'}
\]
for $s>C_3':=\max\{C_3,\eta_0\}$. For any $\ve>0$, we see that
\begin{align*}
\left|\int_0^{T-\ve}g(t)\,\e^{-s t}\,\rd t\right|
& \le C s^{-1}\e^{-s T}+C s^{\f\al2} \exp(C s^{\f\al2})\f{\e^{-(s-C_3')T}}{s-C_3'}
+\int_{T-\ve}^T|g(t)|\,\e^{-s t}\,\rd t\\
& \le C s^{-1}\e^{-s T}
+C s^{\f\al2}\exp(C s^{\f\al2}-\ve s)\f{\e^{C_3'T}}{s-C_3'}\e^{-s(T-\ve)}\\
& \quad\,+\f{C|\e^{-s(T-\ve)}-\e^{-s T}|}s, \quad s>C_3'.
\end{align*}
Choosing $s>0$ large enough, we arrive at
\[
\left|\int_0^{T-\ve}g(t)\,\e^{s(T-\ve-t)}\,\rd t\right|\le C,\quad s > C_3'.
\]
Introducing $\wt g(t):=g(T-\ve-t)$, we immediately have
\[
\left|\int_0^{T-\ve}\wt g(t)\,\e^{s t}\,\rd t\right|\le C,\quad s>C_3'.
\]
For $0<s\le C_3'$, we estimate
\[
\left|\int_0^{T-\ve}\wt g(t)\,\e^{s t}\,\rd t\right|
\le\e^{C_3'(T-\ve)}\int_0^{T-\ve}|\wt g(t)|\,\rd t
=\e^{C_3'(T-\ve)}\|g\|_{L^1(0,T-\ve)}.
\]
Therefore, by defining
$\wt G(z):=\int_0^{T-\ve}\wt g(t)\,\e^{z t}\,\rd t$, we have
\[
\left|\wt G(z)\right|\le C_{16}
:=\max\left\{C,\e^{C_3'(T-\ve)}\|g\|_{L^1(0,T-\ve)}\right\},\quad\arg z=0.
\]
Meanwhile, for $\arg z=\f\pi2$, it is readily seen that
\[
\left|\wt G(z)\right|\le\int_0^{T-\ve}|\wt g(t)|\,\rd t
=\|g\|_{L^1(0,T-\ve)}.
\]
For $0<\arg z<\f\pi2$, we estimate
\[
\left|\wt G(z)\right|\le\int_0^{T-\ve}|\wt g(t)|\,\e^{|z|t}\,\rd t
\le\|g\|_{L^1(0,T-\ve)}\,\e^{(T-\ve)|z|}.
\]
Then we can apply Lemma \ref{lem-PL} with
$\te_1=0$, $\te_2=\f\pi2$ and $\ga=1$ to obtain
\[
\left|\wt G(z)\right|\le\max\left\{C_{16},\|g\|_{L^1(0,T-\ve)}\right\},
\quad0\le\arg z\le\f\pi2.
\]
Similarly, $\wt G(z)$ is also bounded for $-\f\pi2\le\arg z\le0$.
On the other hand, it follows immediately from
the definition of $\wt G(z)$ that it is bounded for $\rRe\,z<0$.
Now that $\wt G(z)$ is bounded and holomorphic in the whole complex plane,
Liouville's theorem guarantees that $\wt G(z)$ is a constant.
Finally, since $\lim_{z\to-\infty}\wt G(z)=0$,
we conclude $\wt G(z)\equiv0$ in $\BC$ and thus $g\equiv0$ in $(0,T-\ve)$.
Since $\ve>0$ can be arbitrarily chosen,
we obtain $g\equiv0$ in $(0,T)$ and eventually $G\equiv0$ in $\BR_+$.

Finally, by the uniqueness of the solution to
the initial-boundary value problem \eqref{eq-3.12}, we obtain
$V\equiv 0$ in $(\de,x_0) \times (0,T)$.
Hence, by \eqref{eq-3.13} we conclude that
$v\equiv 0$ in $(\de,x_0)\times (0,T)$.  Since $v=J^{m\al}u$ in
$(\de,x_0) \times (0,T)$, we immediately see that $u\equiv 0$ in
$(\de,x_0) \times (0,T)$. This completes the proof of Theorem \ref{thm-Cauchy}.

\section{Proof of Theorem \ref{thm-isp}}\label{sec-isp}

In order to prove Theorem \ref{thm-isp}, we show several useful lemmata.
Let $\mu$ be the $(1-\al)$-th Riemann-Liouville derivative of $\rho$, i.e.,
\[
\mu(t)=(D_t^{1-\al}\rho)(t)
=\f1{\Ga(\al)}\f\rd{\rd t}\int_0^t(t-\tau)^{1-\al}\rho(\tau)\,\rd\tau.
\]

\begin{lem}\label{lem-new}
Let $h\in\HH(0,T)$ and $\rho\in H^1(0,T)$ satisfy $\rho(0)\ne0$.
Then the integral equation
\begin{equation}\label{eq-w}
h(t)=\int_0^t\mu(t-\tau)w(\tau)\,\rd\tau
\end{equation}
admits a unique solution $w\in L^2(0,T)$.
\end{lem}

\begin{proof}
Performing $J^{1-\al}$ on both sides of \eqref{eq-w} and by direct calculation,
we deduce
\[
J^{1-\al}h(t)=\int_0^t\rho(t-\tau)w(\tau)\,\rd\tau.
\]
By $h\in H_\al(0,T)$ and $\pa_t^\al=(J^\al)^{-1}$, we see
\[
\f\rd{\rd t}(J^{1-\al}h)=J^{-1}J^{1-\al}h=J^{-1}J^1\pa_t^\al h=\pa_t^\al h.
\]
Since $\rho\in H^1(0,T)\subset C[0,T]$ by the Sobolev embedding,
we differentiate the above equation to derive
\begin{equation}\label{eq-v-II}
\pa_t^\al h(t)=\rho(0)w(t) + \int_0^t\rho'(t-\tau)w(\tau)\,\rd\tau.
\end{equation}
Defining an operator $K:L^2(0,T)\longrightarrow L^2(0,T)$ by
\[(K w)(t)=\int_0^t\rho'(t-\tau)w(\tau)\,\rd\tau,\quad0<t<T,
\]
we see that \eqref{eq-v-II} can be rephrased as
\[
\pa_t^\al h(t)=\rho(0)w(t) + K w(t).
\]
By $\rho'\in L^2(0,T)$, it is not difficult to verify that
$K$ is an integral operator of the Hilbert-Schmidt type.
Then it follows immediately from \cite[Chapter X.2]{Y80} that
$K$ is a compact operator. Moreover, in view of
Gr\"onwall's inequality, we can show that $\rho(0)w + K w=0$ implies $w=0$.
Therefore, it follows from the Fredholm alternative that \eqref{eq-v-II} admits a
unique solution $w\in L^2(0,T)$.
\end{proof}

\begin{lem}[Duhamel's principle]\label{lem-Duhamel}
Let $f\in L^2(0,1),$ $\rho\in H^1(0,T),$ $\rho(0)\ne0$ and
$y\in \HH(0,T;H^1(0,1))$ satisfy \eqref{eq-1.2}. If
\begin{equation}\label{eq-convo}
y(\,\cdot\,,t)=\int_0^t\mu(t-\tau)u(\,\cdot\,,\tau)\,\rd\tau,
\end{equation}
then $u\in L^2(0,T;H^1(0,1))$
satisfies $u-f\in\HH(0,T;H^{-1}(0,1))$ and a homogeneous equation
with the initial value $f${\rm:}
\begin{equation}\label{eq-v-f}
\pa_t^\al(u-f)-u_{x x}+p(x)u=0\quad\mbox{in }(0,1)\times(0,T_*).
\end{equation}
Here $T_*>0$ is some constant.
\end{lem}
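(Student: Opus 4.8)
The plan is to realize the given $y$ as a time-convolution of a homogeneous solution and to invert the Duhamel kernel by means of the preceding integral-equation lemma. First I would fix $\mu$ through $J^{1-\al}\mu=\rho$. Since $\rho\in H^1(0,T)$ and $\rho(0)\ne0$, one has
\[
\mu(t)=\f{\rho(0)}{\Ga(\al)}t^{\al-1}+J^\al\rho'(t)\in L^1(0,T),
\]
so that convolution against $\mu$ is well defined. The singular factor $t^{\al-1}$ reflects that $\mu=(J^{1-\al})^{-1}\rho$ is a Riemann--Liouville derivative, and this is exactly the point at which the hypothesis $\rho(0)\ne0$ enters: it guarantees that $\mu$ exists and that $w\mapsto\mu*w$ is invertible in the sense of the preceding lemma.

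Next I would construct $u$ by deconvolving $y$. The target identity $y=\mu*u$ becomes, after applying $J^{1-\al}$ and differentiating in $t$, the second-kind Volterra equation
\[
\paa y=\rho(0)\,u+\rho'*u.
\]
Reading this in the Hilbert space $H^{-1}(0,1)$ and noting that $\paa y\in L^2(0,T;H^{-1}(0,1))$ by \eqref{eq-1.2} (each of $y_{x x}$, $py$ and $\rho f$ lies there), the operator $w\mapsto\rho(0)w+\rho'*w$ is boundedly invertible on $L^2(0,T;H^{-1}(0,1))$: the proof of the preceding lemma (compactness of $w\mapsto\rho'*w$, a Gr\"onwall argument to exclude a kernel, and the Fredholm alternative) is insensitive to the Hilbert-space-valuedness. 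This yields a unique $u\in L^2(0,T;H^{-1}(0,1))$ with $y=\mu*u$, and by injectivity of $w\mapsto\mu*w$ this $u$ is the only candidate.

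It then remains to verify the homogeneous equation. Substituting $y=\mu*u$ into \eqref{eq-1.2} and invoking the operational identity $\paa(\mu*u)=\mu*\paa(u-f)+\rho f$ --- which holds precisely because $J^{1-\al}\mu=\rho$ and $u-f$ has a vanishing initial trace --- rewrites \eqref{eq-1.2} as $\mu*[\paa(u-f)-u_{x x}+pu]=0$, whence injectivity of the convolution gives \eqref{eq-v-f}. To make these manipulations rigorous (in particular to give meaning to $u_{x x}$ and to the product $pu$) I would first gain classical smoothness by applying a high fractional integral $J^{m\al}$, exactly as in Step 1 of Section \ref{sec-proof}, carry out the identities for the smoothed quantities, and descend afterwards by the injectivity of $J^{m\al}$.

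The main obstacle is the regularity transfer: upgrading the deconvolved $u$ from $L^2(0,T;H^{-1}(0,1))$ to $u\in L^2(0,T;H^1(0,1))$ with $u-f\in\HH(0,T;H^{-1}(0,1))$. The crude bound furnished by the Volterra equation only inherits the spatial regularity of $\paa y$, namely $H^{-1}$, and this is genuinely not sharp --- the conclusion rests on the smoothing effect of the homogeneous time-fractional equation rather than on the equation read off algebraically. I would supply this smoothing through the Mittag-Leffler decay $E_{\al,1}(-\la t^\al)=O((\la t^\al)^{-1})$ for the homogeneous evolution, which makes $\int_0^T\|u(\cdot,t)\|_{H^1}^2\,\rd t$ summable for every $f\in L^2(0,1)$, the membership $u-f\in\HH(0,T;H^{-1}(0,1))$ following from the same expansion together with $\HH(0,T)=J^\al L^2(0,T)$. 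An alternative route, avoiding the bootstrap, is to construct $u$ directly as the unique weak solution of the homogeneous initial-boundary value problem carrying the boundary traces of $y$ (which exist in $L^2(0,T)$ since $H^1(0,1)\hookrightarrow C[0,1]$), read off the regularity from the well-posedness theory of that problem, and identify $\mu*u=y$ by uniqueness for the inhomogeneous initial-boundary value problem; there the delicate point shifts to controlling the temporal regularity of those lateral traces so that the preceding lemma applies to the boundary data.
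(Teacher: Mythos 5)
Your proposal runs in the opposite direction from the paper's proof, and that direction is precisely where it cannot be closed with the tools you invoke. The paper's own argument is the cheap verification: it takes a $u$ that \emph{already} has the stated regularity $u\in L^2(0,T;H^1(0,1))$, $u-f\in\HH(0,T;H^{-1}(0,1))$ and satisfies \eqref{eq-v-f}, rewrites \eqref{eq-v-f} and \eqref{eq-1.2} as the equivalent Volterra forms $u=J^\al(u_{x x}-p u)+f$ and $y=J^\al(y_{x x}-p y)+(J^1\mu)f$, and checks by substituting the former into $\mu*u$ that $\wt y:=\mu*u$ satisfies the latter; the identification of $\wt y$ with the given $y$ (a uniqueness question in the absence of boundary conditions) is deliberately left aside, as the remark immediately after the lemma states. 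You instead attempt the literal forward statement: deconvolve $y$ to manufacture $u$, then upgrade its regularity. That is more ambitious than what the paper proves, and the two patches you offer for the obstacle you honestly flag both fail. First, for the deconvolved $u$, which a priori lies only in $L^2(0,T;H^{-1}(0,1))$, the term $p u$ is not even defined: a merely bounded $p\in L^\infty(0,1)$ is not a multiplier on $H^1_0(0,1)$, hence by duality $p u$ makes no sense for $u\in H^{-1}(0,1)$, and your mollification by $J^{m\al}$ smooths only in time, leaving the spatial pairing exactly as ill-defined; so the smoothed identities you plan to ``descend'' from cannot be written. Second, the Mittag-Leffler decay you invoke for the regularity transfer presupposes an eigenfunction expansion, i.e., an initial-boundary value problem, whereas the lemma carries no boundary conditions --- that absence is the whole point of its formulation. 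Your fallback (pose the homogeneous IBVP with the lateral traces of $y$ and deconvolve the boundary data) founders on the point you yourself name: the integral-equation lemma (Lemma 4.1) requires $h\in\HH(0,T)$, while the traces $y(0,\,\cdot\,)$, $y(1,\,\cdot\,)$ are only known to lie in $L^2(0,T)$, and no temporal trace regularity of the needed strength follows from \eqref{eq-1.3}.

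A smaller but genuine slip: the claim that the proof of Lemma 4.1 ``is insensitive to the Hilbert-space-valuedness'' is false as stated. On $L^2(0,T;H^{-1}(0,1))$ the convolution kernel is $\rho'(t-\tau)\,\mathrm{Id}$, which is neither Hilbert-Schmidt nor compact (the identity on an infinite-dimensional space is not compact), so the Fredholm alternative is unavailable. Invertibility of $w\mapsto\rho(0)w+\rho'*w$ does survive, but via the Volterra structure --- a scalar resolvent kernel from a Neumann series whose iterated kernels gain factorial decay --- not via compactness. In summary: your computations of $\mu=\f{\rho(0)}{\Ga(\al)}t^{\al-1}+J^\al\rho'$ and of the second-kind equation $\paa y=\rho(0)u+\rho'*u$ are correct, and you correctly isolate the regularity transfer as the crux; but that crux is left open, and the paper's proof never confronts it because it establishes only the converse implication.
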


\begin{rem}
{\rm Concerning Duhamel's principle for time-fractional partial differential equations
in different function spaces, we refer e.g.\! to {\rm\cite{LRY16,JLLY}}
and the survey {\rm\cite{U19}}. In comparison with existing literature, we do not attach
the governing equations with boundary conditions in Lemma \ref{lem-Duhamel}.
Therefore, here we only focus on the representation \eqref{eq-convo} instead of
the uniqueness issue.}
\end{rem}

\begin{proof}[Proof of Lemma $\ref{lem-Duhamel}$]
Let $u$ satisfy \eqref{eq-v-f} with the regularity assumed in
Lemma \ref{lem-Duhamel}.  Denoting the right-hand side of \eqref{eq-convo}
by $\wt y$\,, i.e.,
\begin{equation}\label{eq-4.5}
\wt y(\,\cdot\,,t)=\int_0^t\mu(t-\tau)u(\,\cdot\,,\tau)\,\rd\tau,
\end{equation}
we can show that $\wt y$ belongs to $\HH(0,T;H^1(0,1))$.
Next, we know that \eqref{eq-v-f} is equivalent to
\begin{equation}\label{eq-4.6}
u=J^\al(u_{x x}-p(x)u)+f\quad\mbox{in }L^2(0,T;H^{-1}(0,1)).
\end{equation}
Similarly, \eqref{eq-1.2} is equivalent to
\begin{equation}\label{eq-4.7}
\begin{aligned}
y & =J^\al(y_{x x}-p(x)y)+(J^\al\rho)f\\
& =J^\al(y_{x x}-p(x)y)+(J^1\mu)f
\quad\mbox{in }L^2(0,T;H^{-1}(0,1)),
\end{aligned}
\end{equation}
where we used $\mu=D_t^{1-\al}\rho$.
We follow the argument used in the proof of \cite[Theorem 2.6]{JLLY} to obtain
\begin{align*}
J^\al(\wt y_{x x}-p(x)\wt y\,)+(J^1\mu)f
& =J^\al\int_0^t\mu(t-\tau)(u_{x x}-p(x)u)(\tau)\,\rd\tau
+\left(\int_0^t\mu(\tau)\,\rd\tau\right)f\\
& =\int_0^t\mu(t-\tau)\{J^\al(u_{x x}-p(x)u)(\tau)+f\}\,\rd\tau.
\end{align*}
By the Titchmarsh convolution theorem,
it turns out that if $\wt y$ satisfies \eqref{eq-4.7},
then $u$ satisfies \eqref{eq-4.6}, where $T>0$ is replaced by some $T_*>0$.
This completes the proof.
\end{proof}

Now we are well prepared to complete the proof of Theorem \ref{thm-isp}.
Assume that $y\in \HH(0,T;H^1(0,1))$
satisfies \eqref{eq-1.2} along with lateral Cauchy data
$J^\al y(x_0,t)=(J^\al y)_x(x_0,t)=0$.
Employing Lemma \ref{lem-Duhamel}, we obtain
\begin{align*}
0 & =J^\al y(x_0,t)=\int_0^t\mu(t-\tau)J^\al u(x_0,\tau)\,\rd\tau,\\
0 & =(J^\al y)_x(x_0,t)=\int_0^t\mu(t-\tau)J^\al u_x(x_0,\tau)\,\rd\tau,
\end{align*}
where $u$ satisfies \eqref{eq-v-f}.
According to Lemma \ref{lem-new}, we immediately conclude
$J^\al u(x_0,\cdot\,)$ $=J^\al u_x(x_0,\cdot\,)=0$ in $(0,T_*)$.
Finally, a direct application of
Theorem \ref{thm-Cauchy} indicates $u\equiv0$ in $(0,1)\times(0,T_*)$,
which indicates $f\equiv0$ in $(0,1)$ automatically as
the hidden initial value in \eqref{eq-1.2}.

\section{Concluding remarks}\label{sec-rem}

In this paper, we obtained novel sharp uniqueness for an inverse
$x$-source problem for a one-dimensional time-fractional diffusion equation
with a potential. With the aid of Duhamel's principle, the key ingredient
reveals
to be the uniqueness for the lateral Cauchy problem for the corresponding
homogeneous equation. Taking Laplace transform, we changed the original
lateral Cauchy problem to an integral equation involving the initial and
boundary values of the solution. Then we managed to prove the uniqueness by
employing the Phragm\'en-Lindel\"of principle and a generalized Liouville's
theorem (Phragm\'en-Lindel\"of-Liouville argument for short). As a
byproduct, we also established a classical unique continuation property.

Let us mention that the Phragm\'en-Lindel\"of-Liouville argument used in the
proof heavily relies on the dimension in space. It is interesting to
investigate the lateral Cauchy problem for time-fractional diffusion
equations in higher spatial dimensions.

\appendix
\section{Proofs of \eqref{1.6}, \eqref{eq-3.3} and
Lemmata \ref{lem-SK}--\ref{lem-W}}\label{sec-app}

\begin{proof}[Proof of \eqref{1.6}]
The proof is similar to the one of \eqref{eq-3.7}.
By \eqref{1.3} and $\paa (u-a) = J^{-\al}(u-a)$ for
$u-a \in\HH(0,T;H^{-1}(0,1))$, using
also $p\in L^\infty(0,1)$, we see
$$
u-a - J^\al u_{x x}(x,t) + J^\al(p u) = J^\al F,
$$
that is,
$$
(J^\al u)_{x x}(x,t) = u-a + J^\al(p u) - J^\al F
\in L^2(0,T;L^2(0,1)).
$$
Then, since $J^\al u \in L^2(0,T;H^1(0,1))$ by the second
regularity condition in \eqref{1.4},
the interior regularity for an elliptic operator
$\f{\rd^2}{\rd x^2}$ (e.g., \cite[Theorem 8.8]{GT}) yields
\eqref{1.6}, and the proof of \eqref{1.6} is complete.
\end{proof}

\begin{proof}[Proof of \eqref{eq-3.3}]
We set $H^{-2}(0,1):=(H^2_0(0,1))'$, where
$H^2_0(0,1):=\{v\in H^2(0,1)\mid v(0)=v_x(0)=v(1)=v_x(1)=0\}$.
First for $\ga>0$, we prove
\begin{equation}\label{A.1}
J^\ga\left(_{H^{-2}(0,1)}\langle w(\,\cdot\,,t),\psi\rangle_{H^2_0(0,1)}\right)
={}_{H^{-2}(0,1)}\langle J^\ga w(\,\cdot\,,t),\psi\rangle_{H^2_0(0,1)}
\end{equation}
for $w\in L^2(0,T;H^{-2}(0,1))$, $\psi\in H^2_0(0,1)$ and almost all $t\in (0,T)$.
We note that
$$
J^\ga(w(\,\cdot\,,t),\psi)_{L^2(0,1)}=(J^\ga w(\,\cdot\,,t),\psi)_{L^2(0,1)}
$$
is directly seen by Fubini's theorem, but \eqref{A.1} with
$_{H^{-2}(0,1)}\langle w(\,\cdot\,,t),\psi\rangle_{H^2_0(0,1)}$ is not trivial.

We can verify \eqref{A.1} as follows. For $w\in L^2(0,T;H^{-2}(0,1))$, by the
definition of Bochner's integral (e.g., Yosida \cite{Y80}), we can choose
a sequence $w_n(x,t):=\sum_{j=1}^{m(n)}a_j^n(x)\chi_{E_j^n}(t)$ ($n\in\BN$)
of simple functions such that $w_n\longrightarrow w$ in $L^2(0,T;$ $H^{-2}(0,1))$.
Here $m(n)\in\BN$, $a_j^n\in H^{-2}(0,1)$, $E_j^n\subset(0,T)$ are
measurable sets, and $\chi_E$ indicates the characteristic function of $E$. Then
$$
_{H^{-2}(0,1)}\langle w_n(\,\cdot\,,t),\psi\rangle_{H^2_0(0,1)}\longrightarrow
{}_{H^{-2}(0,1)}\langle w(\,\cdot\,,t),\psi\rangle_{H^2_0(0,1)}\quad\mbox{in }L^2(0,T)
$$
as $n\to\infty$ for all $\psi\in H^2_0(0,1)$ and almost all $t\in(0,T)$. Moreover,
\begin{align*}
J^\ga\left(_{H^{-2}(0,1)}\langle w_n(\,\cdot\,,t),\psi\rangle_{H^2_0(0,1)}\right)
& =J^\ga\left(\sum_{j=1}^{m(n)}
{}_{H^{-2}(0,1)}\langle a_j^n,\psi\rangle _{H^2_0(0,1)}\chi_{E_j^n}(t)\right)\\
& ={}_{H^{-2}(0,1)}\langle J^\ga w_n(\,\cdot\,,t),\psi\rangle _{H^2_0(0,1)}
\end{align*}
for all $n\in\BN$, $\psi\in H^2_0(0,1)$ and almost all $t\in(0,T)$. Using that
$J^\ga:L^2(0,T)\longrightarrow L^2(0,T)$ is continuous and letting $n\to\infty$,
we reach \eqref{A.1}.

Next, we notice that $u\in L^2(0,T;H^1(0,1))$ is a weak solution to \eqref{eq-1.4}
and satisfies $u-a\in H_\al(0,T;H^{-1}(0,1))$, so that we have the weak form:
$$
_{H^{-2}(0,1)}\langle\pa_t^\al(u-a)(\,\cdot\,,t),\psi\rangle_{H_0^2(0,1)}
+(u_x(\,\cdot\,,t),\psi_x)_{L^2(0,1)}+(p\,u(\,\cdot\,,t),\psi)_{L^2(0,1)}=0
$$
for all $\psi\in H^2_0(0,1)$ and almost all $t \in (0,T)$.
Moreover, by integration by parts, we have
$$
_{H^{-2}(0,1)}\langle\pa_t^\al(u-a)(\,\cdot\,,t),\psi\rangle_{H_0^2(0,1)}
-(u(\,\cdot\,,t),\psi_{x x})_{L^2(0,1)}+(p\,u(\,\cdot\,,t),\psi)_{L^2(0,1)}=0
$$
for all $\psi\in H^2_0(0,1)$ and almost all $t \in (0,T)$.
We operate $J^{m\al}$ to both sides to have
\begin{align*}
0 & =J^{m\al}\left(_{H^{-2}(0,1)}\langle\pa_t^\al(u-a)(\,\cdot\,,t),\psi\rangle_{H_0^2(0,1)}\right)
-(J^{m\al}u(\,\cdot\,,t),\psi_{x x})_{L^2(0,1)}\\
& \quad\,+(p\,J^{m\al}u(\,\cdot\,,t),\psi)_{L^2(0,1)}
\end{align*}
for all $\psi\in H^2_0(0,1)$ and almost all $t \in (0,T)$. Here we used
$J^{m\al}(u(\,\cdot\,,t),\psi)_{L^2(0,1)}=(J^{m\al}u(\,\cdot\,,t),\psi)_{L^2(0,1)}$ etc.,
which is immediately seen.

Now we apply \eqref{A.1} with $\ga=m\al$ to obtain
\begin{align*}
0 & ={}_{H^{-2}(0,1)}\langle J^{m\al}\pa_t^\al(u-a)(\,\cdot\,,t),\psi\rangle_{H_0^2(0,1)}
-(J^{m\al}u(\,\cdot\,,t),\psi_{x x})_{L^2(0,1)}\\
& \quad\,+(p\,J^{m\al}u(\,\cdot\,,t),\psi)_{L^2(0,1)}
\end{align*}
for all $\psi\in H^2_0(0,1)$ and almost all $t \in (0,T)$.
By $u-a\in H_\al(0,T;H^{-1}(0,1))\subset H_\al(0,T;$ $H^{-2}(0,1))$, we see
$\pa_t^\al(u-a)\in L^2(0,T;H^{-2}(0,1))$, so that
$J^{m\al}\pa_t^\al(u-a)=\pa_t^\al J^{m\al}(u-a)$. Therefore, since
$\psi\in H^2_0(0,1)$ is arbitrary, we reach
$$
\pa_t^\al J^{m\al}(u-a)(\,\cdot\,,t)-(J^{m\al}u)_{x x}(\,\cdot\,,t)
+p\,J^{m\al}u(\,\cdot\,,t)=0\quad\mbox{in }H^{-2}(0,1)
$$
for almost all $t\in(0,T)$. Thus the proof of \eqref{eq-3.3} is complete.
\end{proof}

\begin{proof}[Proof of Lemma $\ref{lem-SK}$]
First by direct calculation, it is not difficult to arrive at the following estimate
$$
\|S(t)a\|^2 \le\sum_{n=1}^\infty |(a,\vp_n)|^2|E_{\al,1}(-\la_n t^\al) |^2.
$$
Decomposing the above summation into two parts according to the signs of $\la_n$,
we further derive
$$
\|S(t)a\|^2\le C\sum_{n=1}^{n_0-1}|(a,\vp_n)|^2\exp(|\la_n|^{\f2\al}t^2)
+C\sum_{n=n_0}^\infty\f{|(a,\vp_n)|^2}{(1+|\la_n|t^\al)^2},
$$
where we applied the asymptotic estimates for the Mittag-Leffler functions
(see Podlubny \cite[Theorem 1.5]{P99} for the case of $n\le n_0$
and \cite[Theorem 1.6]{P99} for that of $n>n_0$).
Moreover, from the fact that $|\la_1|\ge|\la_n|$ ($1\le n\le n_0$),
it follows that
$$
\|S(t)a\|\le C\exp(|\la_1|^{\f1\al}t)\sum_{n=1}^\infty|(a,\vp_n)|^2
=C\exp(|\la_1|^{\f1\al}t)\|a\|.
$$
Next, we check the convergence in \eqref{eq-3.15}.
For this, it suffices to evaluate the following series
$$
\|S(t)a-a\|^2=\sum_{n=1}^\infty|(a,\vp_n)|^2|E_{\al,1}(-\la_n t^\al)-1|^2.
$$
From the above calculation, it follows that
$|E_{\al,1}(-\la_n t^\al)|\le C\exp(|\la_1|^{\f1\al}t)$. Together with the continuity of
the Mittag-Leffler function and the dominated convergence theorem, we see that
$$
\lim_{t\to0}\|S(t)a -a\|^2=\sum_{n=1}^\infty|(a,\vp_n)|^2
\lim_{t\to0}|E_{\al,1}(-\la_n t^\al) -1|^2=0.
$$
For \eqref{eq-3.14}, we employ the termwise differentiability
of the series in $S(t)a$ and the formula
\[
E_{\al,1}(-\la_n t^\al) = -\la_n t^{\al-1} E_{\al,\al}(-\la_n t^\al)
\]
to calculate the derivative $S'(t)a$ as
\begin{align*}
S'(t)a=-\sum_{n=1}^\infty t^{\al-1}\la_n(a,\vp_n)E_{\al,1}(-\la_n t^\al)\vp_n.
\end{align*}
Then the definition of $K(t)$ implies immediately $A_\de K(t)a=-S'(t)a$.
This completes the proof of Lemma \ref{lem-SK}.
\end{proof}

\begin{proof}[Proof of Lemma $\ref{lem-W}$]
Notice that the eigenfunctions $\{\vp_n\}$ forms an orthonormal basis of
$L^2(\de,x_0)$. Then by the Fourier expansion argument e.g.\! used in the proof
of Sakamoto and Yamamoto \cite[Theorem 2.1]{SY11}, we similarly obtain that
the solution $W$ to the problem \eqref{eq-3.16} admits the following series representation
\[
W(x,t)=\sum_{n=1}^\infty\left(\int_0^t\tau^{\al-1}E_{\al,\al}(-\la_n\tau^\al)
(F(\,\cdot\,,t-\tau),\vp_n)\,\rd\tau\right)\vp_n(x).
\]
Now it remains to check that the above defined Fourier series converges in the sense of
$L^2(0,t_0;L^2(\de,x_0))$. Indeed, introducing
\[
W_N(x,t):=\sum_{n=1}^N\left(\int_0^t\tau^{\al-1}E_{\al,\al}(-\la_n\tau^\al)
(F(\,\cdot\,,t-\tau),\vp_n)\,\rd\tau\right)\vp_n(x)
\]
for $N\in\BN$, it is readily seen that
\[
\|W_N(\,\cdot\,,t)\|^2\le\sum_{n=1}^N\left(\int_0^t
\tau^{\al-1}E_{\al,\al}(-\la_n\tau^\al)(F(\,\cdot\,,t-\tau),\vp_n)\,\rd\tau\right)^2.
\]
From Young's convolution inequality and using $E_{\al,\al}(-\la_n t^\al)\ge0$
(see e.g. \cite[Lemma 3.3]{SY11}), it follows that
\begin{align*}
\|W_N\|_{L^2(0,t_0;L^2(\de,x_0))}^2
& \le\sum_{n=1}^N\int_0^{t_0}\left(\int_0^t
\tau^{\al-1}E_{\al,\al}(-\la_n\tau^\al)(F(\,\cdot\,,t-\tau),\vp_n)\,\rd\tau\right)^2\rd t\\
& \le\sum_{n=1}^N\left(\int_0^{t_0}t^{\al-1}E_{\al,\al}(-\la_n t^\al)\,\rd t\right)^2
\int_0^{t_0}|(F(\,\cdot\,,t),\vp_n)|^2\rd t.
\end{align*}
Similarly to the proof of Lemma \ref{lem-SK}, we can see that
$$
|E_{\al,\al}(-\la_n t^\al)|\le C\exp(|\la_1|^{\f1\al}t),\quad\forall\,n\in\BN,
$$
which implies that
\[
\|W_N\|_{L^2(0,t_0;L^2(\de,x_0))}^2
\le C\sum_{n=1}^N\int_0^{t_0}|(F(\,\cdot\,\tau),\vp_n)|^2\rd t
\le C\|F\|_{L^2(0,t_0;L^2(\de,x_0))}^2.
\]
This implies the uniform boundedness of $\{W_N\}$ in $L^2(0,t_0;L^2(\de,x_0))$
and by passing $N\to\infty$, we obtain $W\in L^2(0,t_0;L^2(\de,x_0))$.
The proof of Lemma \ref{lem-W} is completed.
\end{proof}

\section*{Acknowledgement}
The authors thank the anonymous referees for their valuable comments.
Z.\! Li is supported by National Natural Science Foundation of China (NSFC) (No.\! 11801326).
Y.\! Liu is supported by Grant-in-Aid for Early Career Scientists 20K14355 and 22K13954, Japan Society for the Promotion of Science (JSPS).
M.\! Yamamoto is supported by Grant-in-Aid for Scientific Research (A) 20H00117 and Grant-in-Aid for Challenging Research (Pioneering) 21K18142, JSPS and by NSFC (Nos.\! 11771270, 91730303).


\begin{thebibliography}{99}

\bibitem{Ad}
R.A. Adams, Sobolev Spaces, Academic Press, New York, 1975.

\bibitem{AG92}
E.E. Adams, L.W. Gelhar, Field study of dispersion in an heterogeneous
aquifer 2. Spatial moments analysis, Water Resour. Res. 28 (1992) 3293--307.

\bibitem{CLN13}
J. Cheng, C.-L. Lin, G. Nakamura, Unique continuation property for the
anomalous diffusion and its application, J. Differential Equations 254 (2013)
3715--3728.

\bibitem{EK04}
S.D. Eidelman, A.N. Kochubei, Cauchy problem for fractional diffusion
equations, J. Differential Equations 199 (2004) 211--255.

\bibitem{GT}
D. Gilbarg, N.S. Trudinger, {\it Elliptic Partial Differential
Equations of Second Order}, Springer-Verlag, Berlin, 2001.

\bibitem{GLY}
R. Gorenflo, Y. Luchko, M. Yamamoto, Time-fractional diffusion equation 
in the fractional Sobolev spaces, Fract. Calc. Appl. Anal. 18
(2015) 799--820. 

\bibitem{HH98}
Y. Hatano, N. Hatano, Dispersive transport of ions in column experiments: an
explanation of long-tailed profiles, Water Resour. Res. 34 (1998) 1027--1033.

\bibitem{HLY}
X, Huang, Z. Li, M. Yamamoto, Carleman estimates for the time-fractional
advection-diffusion equations and applications, Inverse Problems 35 (2019)
045003.

\bibitem{I06}
V. Isakov, {\it Inverse Problems for Partial Differential Equations (Second Edition)}, Springer, Berlin, 2006.

\bibitem{JLLY}
D. Jiang, Z. Li, Y. Liu, M. Yamamoto, Weak unique continuation property and a
related inverse source problem for time-fractional diffusion-advection 
equations, Inverse Problems 33 (2017) 055013.

\bibitem{KRY21}
A. Kubica, K. Ryszewska, M. Yamamoto, Time-Fractional Differential Equations:
A Theoretical Introduction, Springer, Singapore, 2020.

\bibitem{lev}
B.M. Levitan, I.S. Sargsian, Introduction to Spectral Theory: Selfadjoint
Ordinary Differential Operators, AMS, Providence, 1975.

\bibitem{LY-FCAA}
Z. Li, M. Yamamoto, Unique continuation principle for the one-dimensional
time fractional diffusion equation, Fract. Calc. Appl. Anal. 22 (2019) 664--657.

\bibitem{LN16}
C.-L. Lin, G. Nakamura, Unique continuation property for anomalous slow
diffusion equation, Comm. Partial Differential Equations 41 (2016) 749--758.

\bibitem{LN19}
C.-L. Lin, G. Nakamura, Unique continuation property for multi-terms time
fractional diffusion equations, Math. Ann. 373 (2019) 929--952.

\bibitem{LN21}
C.-L. Lin, G. Nakamura, Classical unique continuation property for multi-term
time-fractional evolution equations, Math. Ann. (accepted).

\bibitem{LLY19}
Y. Liu, Z. Li, M. Yamamoto, Inverse problems of determining sources of the
fractional partial differential equations, in: Handbook of Fractional Calculus
with Applications. Volume 2: Fractional Differential Equations, De Gruyter,
Berlin, 2019, pp. 431--442.

\bibitem{LRY16}
Y. Liu, W. Rundell, M. Yamamoto, Strong maximum principle for fractional
diffusion equations and an application to an inverse source problem,
Fract. Calc. Appl. Anal. 19 (2016) 888--906.

\bibitem{LY19}
Y. Luchko, M. Yamamoto, Maximum principle for the time-fractional PDEs,
in: Handbook of Fractional Calculus with Applications. Volume 2: Fractional
Differential Equations, De Gruyter, Berlin, 2019, pp. 299--326.

\bibitem{P99}
I. Podlubny, Fractional Differential Equations, Academic Press, San Diego,
1999.

\bibitem{R74}
W. Rudin, Real and Complex Analysis, McGraw-Hill, Osborne, 1974.

\bibitem{SS87}
J.C. Saut, B. Scheurer, Unique continuation for some evolution equations, J.
Differential Equations 66 (1987) 118--139.

\bibitem{SS03}
E.M. Stein, R. Shakarchi, Complex Analysis, Princeton University Press,
Princeton, 2003.

\bibitem{SY11}
K. Sakamoto, M. Yamamoto, Initial value/boundary value problems for
fractional diffusion-wave equations and applications to some inverse problems,
J. Math. Anal. Appl. 382 (2011) 426--447.

\bibitem{U19}
S. Umarov, Fractional Duhamel principle, in: Handbook of Fractional Calculus
with Applications. Volume 2: Fractional Differential Equations, De Gruyter,
Berlin, 2019, pp. 383--410.

\bibitem{XCY11}
X. Xu, J. Cheng, M. Yamamoto, Carleman estimate for a fractional diffusion
equation with half order and application, Appl. Anal. 90 (2011) 1355--1371.

\bibitem{Ya22}
M. Yamamoto, Fractional calculus and time-fractional differential equations: 
revisit and construction of a theory, {\it Math.}, {\bf10} (2022) 698.

\bibitem{Y80}
K. Yosida, Functional Analysis (6th edition), Springer, Berlin, 1980.

\end{thebibliography}
\end{document}